\newcommand{\erre}{\mathbb{R}}
\newcommand{\E}{\mathbb{E}}
\newcommand{\EE}{\mathcal{E}}
\renewcommand{\P}{\mathbb{P}}
\newcommand{\m}{\bar{\mu}}
\newcommand{\ip}[2]{\langle #1,#2 \rangle}
\newcommand{\bip}[2]{\left\langle #1,#2 \right\rangle}
\newtheorem{prop}{Proposition}
\newtheorem{thm}[prop]{Theorem}
\newtheorem{lemma}[prop]{Lemma}
\newtheorem{defi}[prop]{Definition}
\theoremstyle{remark}
\newtheorem{rmk}[prop]{Remark}
\begin{document}

\title[Reaction-diffusion equations with jumps]{Well-posedness and
  asymptotic behavior for stochastic reaction-diffusion equations with
  multiplicative Poisson noise}

\author{Carlo Marinelli}

\address{Facolt\`a di Economia, Universit\`a di Bolzano, Piazza
  Universit\`a 1, I-39100 Bolzano, Italy.}
\urladdr{http://www.uni-bonn.de/$\sim$cm788}

\author{Michael R\"ockner}

\address{Fakult\"at f\"ur Mathematik, Universit\"at
Bielefeld, Postfach 100 131, D-33501 Bielefeld, Germany, and
Departments of Mathematics and Statistics, Purdue University,
150 N. University St., West Lafayette, IN 47907-2067, USA.}
\email{roeckner@math.uni-bielefeld.de}

\date{13 October 2010}

\begin{abstract}
  We establish well-posedness in the mild sense for a class of
  stochastic semilinear evolution equations with a polynomially
  growing quasi-monotone nonlinearity and multiplicative Poisson
  noise. We also study existence and uniqueness of invariant measures
  for the associated semigroup in the Markovian case. A key role is
  played by a new maximal inequality for stochastic convolutions in
  $L_p$ spaces.
\end{abstract}

\subjclass[2000]{60H15; 60G57}

\keywords{Stochastic PDE, reaction-diffusion equations, Poisson
  measures, monotone operators.}

\thanks{The work for this paper was carried out while the first author
  was visiting the Department of Mathematics of Purdue University
  supported by a grant of the EU. The authors are grateful to an
  anonymous referee for carefully reading the first draft of the
  paper}

\maketitle


\section{Introduction}
The purpose of this paper is to obtain existence and uniqueness of
solutions, as well as existence and uniqueness of invariant measures,
for a class of semilinear stochastic partial differential equations
driven by a discontinuous multiplicative noise. In particular, we
consider the mild formulation of an equation of the type
\begin{equation}     \label{eq:prima}
du(t) + Au(t)\,dt + F(u(t))\,dt = \int_Z G(u(t-),z)\,\m(dt,dz)
\end{equation}
on $L_2(D)$, with $D$ a bounded domain of $\erre^n$. Here $-A$ is the
generator of a strongly continuous semigroup of contractions, $F$ is a
nonlinear function satisfying monotonicity and polynomial growth
conditions, and $\m$ is a compensated Poisson measure. Precise
assumptions on the data of the problem are given in Section
\ref{sec:wp} below. We would like to note that, under appropriate
assumptions on the coefficients, all results of this paper continue to
hold if we add a stochastic term of the type $B(u(t))\,dW(t)$ to the
right hand side of (\ref{eq:prima}), where $W$ is a cylindrical Wiener
process on $L_2(D)$ (see Remark \ref{rmk:ext} below). For
simplicity we concentrate on the jump part of the noise. Similarly,
all results of the paper still hold with minimal modifications if we
allow the functions $F$ and $G$ to depend also on time and to be
random.

While several classes of semilinear stochastic PDEs driven by Wiener
noise, also with rather general nonlinearity $F$, have been
extensively studied (see e.g. \cite{cerrai-libro,DPR-sing,DZ92} and
references therein), a corresponding body of results for equations
driven by jump noise seems to be missing. Let us mention, however,
several notable exceptions: existence of local mild solutions for
equations with locally Lipschitz nonlinearities has been established
in \cite{Kote-Doob} (cf. also \cite{cm:MF10}); stochastic PDEs with
monotone nonlinearities driven by general martingales have been
investigated in \cite{Gyo-semimg} in a variational setting, following
the approach of \cite{KR-spde} (cf. also \cite{cm:IDAQP09} for an ad
hoc method); an analytic approach yielding weak solutions (in the
probabilistic sense) for equations with singular drift and additive
L\'evy noise has been developed in \cite{LR-heat}. The more recent
monograph \cite{PZ-libro} deals also with semilinear SPDEs with
monotone nonlinearity and additive L\'evy noise, and contains a
well-posedness result under a set of regularity assumptions on $F$ and
the stochastic convolution. In particular, continuity with respect to
stronger norms (more precisely, in spaces continuously embedded into
$L_2(D)$) is assumed. We avoid such conditions, thus making our
assumptions more transparent and much easier to verify.

Similarly, not many results are available about the asymptotic
behavior of the solution to SPDEs with jump noise, while the
literature for equations with continuous noise is quite rich (see the
references mentioned above). In this work we show that under a suitably
strong monotonicity assumption one obtains existence, uniqueness, and
ergodicity of invariant measures, while a weaker monotonicity
assumption is enough to obtain the existence of invariant
measures.

Our main contributions could be summarized as follows: we provide a) a
set of sufficient conditions for well-posedness in the mild sense for
SPDEs of the form (\ref{eq:prima}), which to the best of our knowledge
is not contained nor can be derived from existing work; b) a new
concept of generalized mild solution which allows us to treat
equations with a noise coefficient $G$ satisfying only natural
integrability and continuity assumptions; c) existence of invariant
measures without strong dissipativity assumptions on the coefficients
of (\ref{eq:prima}). It is probably worth commenting a little further
on the first issue: it is in general not possible to find a triple $V
\subset H \subset V'$ (see e.g. \cite{Gyo-semimg,KR-spde,PreRoeck} for
details) such that $A+F$ is defined from $V$ to $V'$ and satisfies the
usual continuity, accretivity and coercivity assumptions needed for
the theory to work. For this reason, general semilinear SPDEs cannot
be (always) treated in the variational setting. Moreover, the
Nemitskii operator associated to $F$ is in general not locally
Lipschitz on $L_2(D)$, so one cannot hope to obtain global
well-posedness invoking the local well-posedness results of
\cite{Kote-Doob}, combined with a priori estimates. Finally, while the
analytic approach of \cite{LR-heat} could perhaps be adapted to our
situation, it would cover only the case of additive noise, and
solutions would be obtained only in the sense of the martingale
problem.

The main tool employed in the existence theory is a
Bichteler-Jacod-type inequality for stochastic convolutions on $L_p$
spaces, combined with monotonicity estimates. To obtain well-posedness
for equations with general noise, also of multiplicative type, we need
to relax the concept of solution we work with, in analogy to the
deterministic case (see \cite{BenBre,Bmax}). Finally, we prove
existence of an invariant measure by an argument based on
Krylov-Bogoliubov's theorem under weak dissipativity
conditions. Existence and uniqueness of an invariant measure under
strong dissipativity conditions is also obtained, adapting a classical
method (see e.g. \cite{DZ96}).

The paper is organized as follows. In Section \ref{sec:wp} all
well-posedness results are stated and proved, and Section
\ref{sec:ergo} contains the results on invariant measures. Finally, we
prove in the Appendix an auxiliary result used in Section
\ref{sec:wp}.

Let us conclude this section with a few words about notation. Generic
constants will be denoted by $N$, and we shall use the shorthand
notation $a \lesssim b$ to mean $a \leq Nb$. If the constant $N$
depends on a parameter $p$, we shall also write $N(p)$ and $a
\lesssim_p b$. Given a function $f:\erre\to\erre$, we shall denote its
associated Nemitsky operator by the same symbol. Moreover, given an
integer $k$, we shall write $f^k$ for the function $\xi \mapsto
f(\xi)^k$. For any topological space $X$ we shall denote its Borel
$\sigma$-field by $\mathcal{B}(X)$. We shall occasionally use standard
abbreviations for stochastic integrals with respect to martingales and
stochastic measures, so that $H\cdot X(t) := \int_0^t H(s)\,dX(s)$ and
$\phi \star \mu(t) := \int_0^t\int \phi(s,y)\,\mu(ds,dy)$ (see
e.g. \cite{JacShi} for more details). Given two Banach spaces $E$ and
$F$, we shall denote the set of all functions $f:E \to F$ such that
\[
\sup_{x \neq y} \frac{|f(x)-f(y)|_F}{|x-y|_E} < \infty
\]
by $\dot{C}^{0,1}(E,F)$.

\section{Well-posedness}     \label{sec:wp}
Let $(\Omega,\mathcal{F},(\mathcal{F}_t)_{t \geq 0},\P)$ be a filtered
probability space satisfying the usual conditions and $E$ denote
expectation with respect to $\P$. All stochastic elements will be
defined on this stochastic basis, unless otherwise specified. The
preditable $\sigma$-field will be denoted by $\mathcal{P}$. Let
$(Z,\mathcal{Z},m)$ be a measure space, $\m$ a Poisson measure on
$[0,T] \times Z$ with compensator $\mathrm{Leb} \otimes m$, where Leb
stands for Lebesgue measure. We shall set, for simplicity of notation,
$Z_t = (0,t] \times Z$, for $t \geq 0$, and
$L_p(Z_t):=L_p(Z_t,\mathrm{Leb}\otimes m)$.
Let $D$ be an open bounded subset of $\erre^n$ with smooth boundary
$\partial D$, and set $H=L_2(D)$. The norm and inner product in $H$
are denoted by $|\cdot|$ and $\ip{\cdot}{\cdot}$, respectively, while
the norm in $L_p(D)$, $p \geq 1$, is denoted by $|\cdot|_p$. Given a
Banach space $E$, we shall denote the set of all $E$-valued random
variables $\xi$ such that $\E|\xi|^p<\infty$ by $\mathbb{L}_p(E)$. For
compactness of notation, we also set $\mathbb{L}_p:=\mathbb{L}_p(L_p(D))$.
Moreover, we denote the set of all adapted processes
$u:[0,T]\times\Omega \to H$ such that
\[
|[u]|_p := \Big( \sup_{t \leq T} \E|u(t)|^p \Big)^{1/p} < \infty,
\qquad
\|u\|_p := \Big( \E\sup_{t \leq T} |u(t)|^p \Big)^{1/p} < \infty
\]
by $\mathcal{H}_p(T)$ and $\mathbb{H}_p(T)$, respectively. Note that
$(\mathcal{H}_p(T),|[\,\cdot\,]|_p)$ and
$(\mathbb{H}_p(T),\|\cdot\|_p)$ are Banach spaces. We shall also use
the equivalent norms on $\mathbb{H}_p(T)$ defined by
\[
\|u\|_{p,\alpha} := \Big( 
\E\sup_{t \leq T} e^{-p \alpha t}|u(t)|^p \Big)^{1/p},
\qquad \alpha > 0,
\]
and we shall denote $(\mathbb{H}_p(T),\|\cdot\|_{p,\alpha})$ by
$\mathbb{H}_{p,\alpha}(T)$.

\subsection{Additive noise}
Let us consider the equation
\begin{equation}     \label{eq:raga}
du(t) + Au(t)\,dt + f(u(t))\,dt = \eta u(t)\,dt + \int_Z G(t,z)\,\m(dt,dz),
\qquad u(0)=x,
\end{equation}
where $A$ is a linear maximal monotone operator on $H$;
$f:\erre\to\erre$ is a continuous maximal monotone function satisfying
the growth condition $|f(r)| \lesssim 1+|r|^d$ for some (fixed) $d \in
[1,\infty[$; $G:\Omega \times [0,T] \times Z \times D \to \erre$ is a
$\mathcal{P}\otimes\mathcal{Z}\otimes\mathcal{B}(\erre^n)$-measurable
process, such that $G(t,z) \equiv G(\omega,t,z,\cdot)$ takes values in
$H=L_2(D)$. Finally, $\eta$ is just a constant and the corresponding
term is added for convenience (see below). We shall assume throughout
the paper that the semigroup generated by $-A$ admits a unique
extension to a strongly continuous semigroup of positive contractions
on $L_{2d}(D)$ and $L_{d^*}(D)$, $d^*:=2d^2$. For simplicity of
notation we shall not distinguish among the realizations of $A$ and
$e^{-tA}$ on different $L_p(D)$ spaces, if no confusion can arise.
\begin{rmk}
  Several examples of interest satisfy the assumptions on $A$ just
  mentioned. For instance, $A$ could be chosen as the realization of
  an elliptic operator on $D$ of order $2m$, $m \in \mathbb{N}$, with
  Dirichlet boundary conditions (see e.g. \cite{agmon}). The operator
  $-A$ can also be chosen as the generator of a sub-Markovian strongly
  continuous semigroup of contractions $T_t$ on $L_2(D)$. In fact, an
  argument based on the Riesz-Thorin interpolation theorem shows
  that $T_t$ induces a strongly continuous sub-Markovian contraction
  semigroup $T_t^{(p)}$ on any $L_p(D)$, $p \in [2,+\infty[$ (see
  e.g. \cite[Lemma 1.11]{Ebe} for a detailed proof). The latter class
  of operators includes also nonlocal operators such as, for instance,
  fractional powers of the Laplacian, and even more general
  pseudodifferential operators with negative-definite symbols -- see
  e.g. \cite{Jacob3} for more details and examples.
\end{rmk}

\begin{defi}
  Let $x \in \mathbb{L}_{2d}$. We say that $u \in \mathbb{H}_2(T)$ is a
  mild solution of (\ref{eq:raga}) if $u(t) \in L_{2d}(D)$ $\P$-a.s.
  and
  \begin{equation}     \label{eq:di}
  u(t) = e^{-tA}x 
  + \int_0^t e^{-(t-s)A}\big(\eta u(s) - f(u(s))\big)\,ds
  + \int_{Z_t} e^{-(t-s)A} G(s,z)\,\m(ds,dz)
  \end{equation}
  $\P$-a.s. for all $t \in [0,T]$, and all integrals on the right-hand
  side exist. 
\end{defi}
Let us denote the class of processes $G$ as above such that
\[
\E \int_0^T\!\!\Big[\int_Z |G(t,z)|_{p}^p\,m(dz)
+ \Big(\int_Z |G(t,z)|_{p}^2\,m(dz)\Big)^{p/2}\Big]\,dt
< \infty.
\]
by $\mathcal{L}_p$. Setting $d^*=2d^2$, we shall see below that a
sufficient condition for the existence of the integrals appearing in
(\ref{eq:di}) is that $G \in \mathcal{L}_{d^*}$. This also explains
the condition imposed on the sequence $\{G_n\}$ in the next
definition.
\begin{defi}
  Let $x \in \mathbb{L}_2$. We say that $u \in \mathbb{H}_2(T)$ is a
  generalized mild solution of (\ref{eq:raga}) if there exist a
  sequence $\{x_n\} \subset \mathbb{L}_{2d}$ and a sequence
  $\{G_n\} \subset \mathcal{L}_{d^*}$ with $x_n \to x$ in
  $\mathbb{L}_2$ and $G_n \to G$ in $\mathbb{L}_2(L_2(Z_T))$, such
  that $u_n \to u$ in $\mathbb{H}_2(T)$, where $u_n$ is the mild
  solution of (\ref{eq:raga}) with $x_n$ and $G_n$ replacing $x$ and
  $G$, respectively.
\end{defi}
In order to establish well-posedness of the stochastic equation, we
need the following maximal inequalities, that are extensions to a
(specific) Banach space setting of the corresponding inequalities
proved for Hilbert space valued processes in \cite{cm:JFA10}, with a
completely different proof.
\begin{lemma}     \label{lm:31}
  Let $E=L_p(D)$, $p \in [2,\infty)$. Assume that $g:\Omega \times
  [0,T] \times Z \times D \to \erre$ is a $\mathcal{P} \otimes
  \mathcal{Z} \otimes \mathcal{B}(\erre^n)$-measurable function such
  that the expectation on the right-hand side of (\ref{eq:BJ}) below
  is finite. Then there exists a constant $N=N(p,T)$ such that
  \begin{multline}     \label{eq:BJ} 
    \E \sup_{t\leq T} \Big|
    \int_0^t\!\!\int_Z g(s,z)\,\bar\mu(ds,dz)\Big|_E^p \\
    \leq N \E \int_0^T
    \Big[ \int_Z |g(s,z)|_E^p\,m(dz) + \Big(\int_Z
    |g(s,z)|_E^2\,m(dz)\Big)^{p/2} \Big]\,ds,
  \end{multline}
  where $(p,T)\mapsto N$ is continuous.
  Furthermore, let $-A$ be the generator of a strongly continuous
  semigroup $e^{-tA}$ of positive contractions on $E$. Then one also
  has
  \begin{multline}
    \label{eq:BJconv}
    \E \sup_{t\leq T} \Big| 
    \int_0^t\int_Z e^{-(t-s)A} g(s,z)\,\bar\mu(ds,dz)\Big|_E^p\\
    \leq
    N \E \int_0^T \Big[ \int_Z |g(s,z)|_E^p\,m(dz)
    + \Big(\int_Z |g(s,z)|_E^2\,m(dz)\Big)^{p/2} \Big]\,ds,
  \end{multline}
  where $N$ is the same constant as in (\ref{eq:BJ}).
\end{lemma}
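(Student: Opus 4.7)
The plan is to establish (\ref{eq:BJ}) first by reducing to the scalar Bichteler-Jacod inequality via Fubini and Minkowski's integral inequality, and then to deduce (\ref{eq:BJconv}) from (\ref{eq:BJ}) by a dilation argument that converts the stochastic convolution into an ordinary stochastic integral against $\bar\mu$.

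For (\ref{eq:BJ}), since $E=L_p(D)$ with $p\geq 2$, I would write
\[
\Big|\int_0^t\!\!\int_Z g(s,z)\,\bar\mu(ds,dz)\Big|_E^p
= \int_D \Big|\int_0^t\!\!\int_Z g(s,z,x)\,\bar\mu(ds,dz)\Big|^p dx,
\]
then use the trivial bound $\sup_{t\leq T}\int_D \varphi_t(x)\,dx \leq \int_D \sup_{t\leq T}\varphi_t(x)\,dx$ for non-negative integrands, interchange $\E$ and $\int_D$ by Fubini--Tonelli, and apply the \emph{scalar} Bichteler-Jacod (Kunita-type) inequality pointwise in $x\in D$. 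The term involving $\int_Z|g(s,z,x)|^p\,m(dz)$ recombines, by Fubini again, into $\int_Z |g(s,z)|_E^p\,m(dz)$, while the other term requires the estimate
\[
\int_D \Big(\int_Z|g(s,z,x)|^2\,m(dz)\Big)^{p/2} dx
\leq \Big(\int_Z |g(s,z)|_E^2\,m(dz)\Big)^{p/2},
\]
which is precisely Minkowski's integral inequality in $L_{p/2}(D)$ (valid since $p/2\geq 1$). The resulting constant is the scalar Bichteler-Jacod constant, which is continuous in $(p,T)$.

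For (\ref{eq:BJconv}), the natural approach is to invoke a dilation theorem for positive contractive $C_0$-semigroups on $L_p$ spaces (Fendler): there exist a larger $L_p$ space $\tilde E=L_p(\tilde D,\tilde m)$, an isometric embedding $J:E\hookrightarrow\tilde E$, a contractive projection $Q:\tilde E\to E$, and a strongly continuous group of positive isometries $(U_t)_{t\in\erre}$ on $\tilde E$ such that $e^{-tA}=QU_tJ$ for all $t\geq 0$. Writing $e^{-(t-s)A}g(s,z)=QU_t\,U_{-s}Jg(s,z)$ and using contractivity of $QU_t$ $\omega$-wise, one obtains
\[
\Big|\int_0^t\!\!\int_Z e^{-(t-s)A}g(s,z)\,\bar\mu(ds,dz)\Big|_E
\leq \Big|\int_0^t\!\!\int_Z U_{-s}Jg(s,z)\,\bar\mu(ds,dz)\Big|_{\tilde E},
\]
which converts the stochastic convolution into a genuine $\tilde E$-valued c\`adl\`ag martingale. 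Applying (\ref{eq:BJ}) to this martingale in $\tilde E$ and exploiting $|U_{-s}Jg(s,z)|_{\tilde E}=|g(s,z)|_E$ yields the right-hand side of (\ref{eq:BJconv}) with the same constant $N$.

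The scalar Bichteler-Jacod inequality is standard, so the main obstacle is the dilation step: one must verify that Fendler's hypotheses are met (positivity and contractivity on $L_p$, $p\in(1,\infty)$) and that the dilated space $\tilde E$ is itself an $L_p$ space of a measure space, so that (\ref{eq:BJ}) can be legitimately applied there. Once this is in place, the isometric nature of $U$ and $J$ makes the bookkeeping transparent and, crucially, preserves the constant.
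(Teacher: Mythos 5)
Your proposal is correct in outline and rests on the same two main ingredients as the paper's proof: reduction to the scalar Bichteler--Jacod inequality via Fubini and Minkowski's integral inequality for (\ref{eq:BJ}), and Fendler's dilation theorem for positive contraction semigroups on $L_p$ for (\ref{eq:BJconv}); your dilation step is essentially identical to the paper's. Where you genuinely diverge is in the treatment of the supremum in time: the paper first proves the estimate at the terminal time $T$ (with a truncation $Z_n \uparrow Z$ to handle $m(Z)=\infty$), then verifies that $g\star\bar\mu$ is an $E$-valued martingale by testing against $C^\infty_c(D)$ via stochastic Fubini, and finally applies Doob's inequality in $E$; you instead push $\sup_{t\le T}$ inside the $dx$-integral and apply the scalar \emph{maximal} inequality pointwise in $x$, which dispenses with the vector-valued martingale property and Doob altogether. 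This is a legitimate and somewhat leaner route, but it shifts the technical burden onto your first display: identifying the $L_p(D)$-valued stochastic integral, evaluated pointwise in $x$, with the scalar stochastic integral of $g(\cdot,\cdot,x)$, simultaneously for all $t\le T$ and measurably in $(\omega,x)$ (so that $\int_D \sup_{t\le T}(\cdots)\,dx$ is meaningful), is exactly the stochastic Fubini theorem for random measures that the paper invokes at fixed times, upgraded to all $t$ by choosing c\`adl\`ag versions; you should also record that the a.e.-$x$ well-definedness of the scalar integrals and the a.e.-$x$ finiteness of their right-hand sides follow from the finiteness of the right-hand side of (\ref{eq:BJ}) by Fubini and Minkowski (this replaces the paper's Steps~1--2). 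Finally, note that in the dilation step you need (\ref{eq:BJ}) on a general $L_p(Y,n)$ rather than only on $L_p(D)$; your argument (like the paper's) does deliver this, since nothing in the scalar reduction uses boundedness of $D$, and the constant obtained is the scalar Bichteler--Jacod constant in both approaches.
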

\begin{proof}
  We proceed in several steps.

  \noindent \textsl{Step 1.} Let us assume that $m(Z)<\infty$ (this
  hypothesis will be removed in the next step). Note that, by Jensen's
  (or H\"older's) inequality and Fubini's theorem, one has
  \begin{align*}
    \int_D\!\E \int_0^T\!\Big( \int_Z |g(s,z,\xi)|^2\,m(dz)
    \Big)^{p/2}ds\,d\xi &\lesssim \int_D\!\E \int_0^T\!\!\int_Z
    |g(s,z,\xi)|^p\,m(dz)\,ds\,d\xi\\
    &=\E \int_0^T\!\!\int_Z |g(s,z)|^p_E\,m(dz)\,ds < \infty,
  \end{align*}
  therefore, since the right-hand side of (\ref{eq:BJ}) is finite,
  Fubini's theorem implies that 
  \[
  \E \int_0^T \Big[ \int_Z |g(s,z,\xi)|^p\,m(dz) + \Big(\int_Z
  |g(s,z,\xi)|^2\,m(dz)\Big)^{p/2} \Big]\,ds < \infty
  \]
  for a.a. $\xi \in D$. By the Bichteler-Jacod
  inequality for real-valued integrands (see e.g. \cite{BGJ,cm:JFA10})
  we have
  \begin{multline}
    \E \Big| \int_0^T\!\!\!\int_Z g(s,z,\xi)\,\m(ds,dz)\Big|^p \\
    \lesssim_{p,T} \E \int_0^T \Big[ \int_Z |g(s,z,\xi)|^p\,m(dz) +
    \Big(\int_Z |g(s,z,\xi)|^2\,m(dz)\Big)^{p/2} \Big]\,ds
  \end{multline}
  for a.a. $\xi \in D$. Furthermore, Fubini's theorem for integrals
  with respect to random measures (see e.g. \cite{lebedev} or
  \cite[App.~A]{BDMKR}) yields
  \[
  \E \Big| \int_0^T\!\!\!\int_Z g(s,z)\,\m(ds,dz)\Big|^p_{E} = \int_D
  \E \Big| \int_0^T\!\!\!\int_Z g(s,z,\xi)\,\m(ds,dz)\Big|^p\,d\xi,
  \]
  hence also
  \begin{align*}
    \E \Big| \int_0^T\!\!\!\int_Z g(s,z)\,\m(ds,dz)\Big|^p_{E}
    \lesssim_{p,T}\;
    &\E \int_0^T\!\!\!\int_Z\!\int_D |g(s,z,\xi)|^p\,d\xi\,m(dz)\,ds\\
    & + \E\int_0^T\!\!\!\int_D \Big(\int_Z |g(s,z,\xi)|^2\,m(dz)
    \Big)^{p/2}\,d\xi\,ds.
  \end{align*}
  Minkowski's inequality (see e.g. \cite[Thm.~2.4]{LiLo}) implies that
  the second term on the right-hand side of the previous inequality is
  less than or equal to
  \[
  \E\int_0^T\!\!\Big( \int_Z \Big( \int_D
  |g(s,z,\xi)|^p\,d\xi\Big)^{2/p} m(dz) \Big)^{p/2}ds =
  \E\int_0^T\!\!\Big(\int_Z |g(s,z)|^2_{E}\,m(dz)\Big)^{p/2}ds.
  \]
  We have thus proved that
  \[
  \E |g \star \m(T)|_E^p \lesssim_{p,T} \E \int_0^T \Big[ \int_Z
  |g(s,z)|_E^p\,m(dz) + \Big(\int_Z |g(s,z)|_E^2\,m(dz)\Big)^{p/2}
  \Big]\,ds.
  \]
  \smallskip\par\noindent
  \textsl{Step 2.} Let us turn to the general case $m(Z)=\infty$. Let
  $\{Z_n\}_{n\in\mathbb{N}}$ a sequence of subsets of $Z$ such that
  $\cup_{n\in\mathbb{N}} Z_n = Z$, $Z_n \subset Z_{n+1}$ and
  $m(Z_n)<\infty$ for all $n \in \mathbb{N}$. By the Bichteler-Jacod
  inequality for real-valued integrands we have
  \begin{align*}
    &\E\Big| \int_0^T\!\!\int_Z g(s,z,\xi)\mathbf{1}_{Z_n}(z)\,\m(ds,dz)
      \Big|^p\\
    &\quad \lesssim_{p,T} \E\int_0^T\!\!\int_Z
     |g(s,z,\xi)|^p \mathbf{1}_{Z_n}(z)\,m(dz)\,ds
     + \E\int_0^T\!\Big(\int_Z |g(s,z,\xi)|^2 \mathbf{1}_{Z_n}(z)\,m(dz)
         \Big)^{p/2}\,ds\\
    &\quad = \E\int_0^T\!\!\int_Z
     |g(s,z,\xi)|^p\,m_n(dz)\,ds
     + \E\int_0^T\!\Big(\int_Z |g(s,z,\xi)|^2\,m_n(dz)
       \Big)^{p/2}\,ds,
  \end{align*}
  where $m_n(\cdot):=m(\cdot \cap Z_n)$. Integrating both sides of
  this inequality with respect to $\xi$ over $D$ we obtain, using
  Fubini's theorem and Minkowski's inequality, we are left with
  \begin{align*}
    &\E \left| \int_0^T\!\!\int_Z g_n(s,z)\,\m(ds,dz)
       \right|_E^p\\
    &\qquad \lesssim_{p,T}
       \E\int_0^T\!\!\int_Z \big| g_n(s,z) \big|_E^p\,m(dz)\,ds
     + \E\int_0^T\!\Big(\int_Z \big| g_n(s,z)
            \big|_E^2\,m(dz)\Big)^{p/2}\,ds\\
    &\qquad \leq   \E\int_0^T\!\!\int_Z \big| g(s,z) \big|_E^p\,m(dz)\,ds
     + \E\int_0^T\!\Big( \int_Z
       \big|g(s,z)\big|_E^2\,m(dz)\Big)^{p/2}\,ds,
  \end{align*}
  where $g_n(\cdot,z):=g(\cdot,z)\mathbf{1}_{Z_n}(z)$.

  Let us now prove that $g_n \star \m(T)$ converges to $g \star \m(T)$
  on $D \times \Omega$ in $\mathrm{Leb} \otimes \P$-measure as $n \to
  \infty$. In fact, by the isometric formula for stochastic integrals
  with respect to compensated Poisson measures, we have
  \[
  \big| (g_n-g) \star \m(T) \big|^2_{L_2(D \times \Omega)} = \E\int_{D
    \times [0,T] \times Z} \big| g_n(s,z,\xi) - g(s,z,\xi)
  \big|^2\,m(dz)\,ds\,d\xi,
  \]
  which converges to zero as $n \to \infty$ by the dominated
  convergence theorem. In fact, $g_n \uparrow g$ a.e. on $D \times
  [0,T] \times Z$, $\P$-a.s., and
  \begin{align*}
  &\E\int_{D \times [0,T] \times Z} \big| g_n(s,z,\xi) - g(s,z,\xi)
  \big|^2\,m(dz)\,ds\,d\xi\\
  &\qquad \leq 2\E \int_0^T \!\! \int_Z \big| g(s,z)
  \big|^2_{L_2(D)}\,m(dz)\,ds
  \lesssim \E \int_0^T \!\! \int_Z \big| g(s,z)
  \big|^2_E\,m(dz)\,ds < \infty.
  \end{align*}
  Finally, by Fatou's lemma, we have
  \begin{align*}
    \E\big| g \star \m(T) \big|_E^p &=
    \E \int_D \big| (g \star \m(T))(\xi) \big|^p\,d\xi\\
    &\leq \liminf_{n \to \infty} \E \int_D \big| (g_n \star
    \m(T))(\xi) \big|^p\,d\xi
    = \liminf_{n \to \infty} \E\big| g_n \star \m(T) \big|_E^p\\
    &\leq \E\int_0^T\!\!\int_Z \big| g(s,z) \big|_E^p\,m(dz)\,ds +
    \E\int_0^T\!\Big( \int_Z
    \big|g(s,z)\big|_E^2\,m(dz)\Big)^{p/2}\,ds.
  \end{align*}
  \textsl{Step 3.}  Estimate (\ref{eq:BJ}) now follows immediately, by
  Doob's inequality, provided we can prove that $g \star \m$ is an
  $E$-valued martingale. For this it suffices to prove that
  \[
  \E\big[\ip{g \star \m(t)-g \star \m(s)}{\phi}\big|\mathcal{F}_s\big]
  = 0, \qquad 0 \leq s \leq t \leq T,
  \]
  for all $\phi \in C^\infty_c(D)$, the space of infinitely
  differentiable functions with compact support on $D$. In fact, we
  have, by the stochastic Fubini theorem,
  \begin{align*}
    \ip{g \star \m(t)-g \star \m(s)}{\phi} &= \Big\langle
    \int_{(s,t]}\!\int_Z g(r,z)\,\m(dr,dz),\phi\Big\rangle\\
    &= \int_{(s,t]}\!\int_Z\!\int_D
    g(r,z,\xi)\phi(\xi)\,d\xi\,\m(dr,dz),
  \end{align*}
  where the last term has $\mathcal{F}_s$-conditional expectation
  equal to zero by well-known properties of Poisson measures. In order
  for the above computation to be rigorous, we need to show that the
  last stochastic integral is well defined: using H\"older's
  inequality and recalling that $g \in \mathcal{L}_p$, we get
  \begin{align*}
    &\E\int_{(s,t]}\!\int_Z \Big[\int_D
    g(r,z,\xi)\phi(\xi)\,d\xi\Big]^2\,m(dz)\,dr
    \leq |\phi|^2_{\frac{p}{p-1}} \E\int_0^T \int_Z |g(s,z)|_E^2\,m(dz)\,ds\\
    &\qquad \leq |\phi|^2_{\frac{p}{p-1}}T^{p/(p-2)} \Big( \E\int_0^T
    \Big(\int_Z |g(s,z)|_p^2\,m(dz)\Big)^{p/2}\,ds \Big)^{2/p} <
    \infty.
  \end{align*}
  \smallskip\par\noindent
  \textsl{Step 4.}  In order to extend the result to stochastic
  convolutions, we need a dilation theorem due to Fendler
  \cite[Thm.~1]{Fendler}. In particular, there exist a measure space
  $(Y,\mathcal{A},n)$, a strongly continuous group of isometries
  $T(t)$ on $\bar{E}:=L_p(Y,n)$, an isometric linear embedding $j:
  L_p(D) \to L_p(Y,n)$, and a contractive projection $\pi:L_p(Y,n) \to
  L_p(D)$ such that $j \circ e^{tA} = \pi \circ T(t) \circ j$ for all
  $t \geq 0$. Then we have, recalling that the operator norms of $\pi$
  and $T(t)$ are less than or equal to one,
  \begin{eqnarray*}
    \lefteqn{\E \sup_{t\leq T} 
      \Big| \int_0^t\int_Z e^{-(t-s)A} g(s,z)\,\bar\mu(ds,dz)\Big|^p_E}\\
    &=& 
    \E \sup_{t\leq T} \Big|
    \pi T(t) \int_0^t\int_Z T(-s) j(g(s,z))\,\bar\mu(ds,dz)\Big|^p_{\bar{E}}\\
    &\leq& |\pi|^p \; \sup_{t\leq T} |T(t)|^p \;
    \E \sup_{t\leq T} \Big|
    \int_0^t\int_Z T(-s) j(g(s,z))\,\bar\mu(ds,dz)\Big|^p_{\bar{E}}\\
    &\leq& \E \sup_{t\leq T} \Big|
    \int_0^t\int_Z T(-s) j(g(s,z))\,\bar\mu(ds,dz)\Big|^p_{\bar{E}}
  \end{eqnarray*}
  Now inequality (\ref{eq:BJ}) implies that there exists a constant
  $N=N(p,T)$ such that
  \begin{eqnarray*}
    \lefteqn{\E \sup_{t\leq T} 
      \Big| \int_0^t\int_Z e^{-(t-s)A} g(s,z)\,\bar{\mu}(ds,dz)\Big|_E^p}\\
    &\leq& N \E \int_0^T \Big[ \int_Z |T(-s)j(g(s,z))|_{\bar{E}}^p \,m(dz)
    + \Big(\int_Z |T(-s)j(g(s,z))|_{\bar{E}}^2 \,m(dz)\Big)^{p/2} \Big]\,ds\\
    &\leq& N \E \int_0^T \Big[ \int_Z |g(s,z)|_E^p\,m(dz)
    + \Big(\int_Z |g(s,z)|_E^2\,m(dz)\Big)^{p/2} \Big]\,ds
  \end{eqnarray*}
  where we have used again that $T(t)$ is a unitary group and that the
  norms of $\bar{E}$ and $E$ are equal.
\end{proof}
\begin{rmk}

  (i) The idea of using dilation theorems to extend results
  from stochastic integrals to stochastic convolutions has been
  introduced, to the best of our knowledge, in \cite{HauSei}.

  \noindent (ii) Since $g \star \bar{\mu}$ is a martingale taking
  values in $L_{p}(D)$, it has a c\`adl\`ag modification, as it
  follows by a theorem of Brooks and Dinculeanu (see
  \cite[Thm.~3]{BroDinc87}).  Moreover, the stochastic convolution
  also admits a c\`adl\`ag modification by the dilation method, as in
  \cite{HauSei} or \cite[p.~161]{PZ-libro}.
\end{rmk}

We shall need to regularize the monotone nonlinearity $f$ by its
Yosida approximation $f_\lambda$, $\lambda > 0$. In particular, let
$J_\lambda(x)=(I+\lambda f)^{-1}(x)$,
$f_\lambda(x)=\lambda^{-1}(x-J_\lambda(x))$. It is well known that
$f_\lambda(x)=f(J_\lambda(x))$ and $f_\lambda \in \dot{C}^{0,1}(\erre)$ with
Lipschitz constant bounded by $2/\lambda$. For more details on maximal
monotone operators and their approximations see e.g. \cite{barbu,
Bmax}. Let us consider the regularized equation
\begin{equation}     \label{eq:ragayo}
  du(t) + Au(t)\,dt + f_\lambda(u(t))\,dt = \eta u(t)\,dt
  + \int_Z G(t,z)\,\m(dt,dz),
  \qquad u(0)=x,
\end{equation}
which admits a unique c\`adl\`ag mild solution $u_\lambda \in
\mathbb{H}_2(T)$ because $-A$ is the generator of a strongly
continuous semigroup of contractions and $f_\lambda$ is Lipschitz (see
e.g. \cite{Kote-Doob,cm:JFA10,PZ-libro}).

We shall now establish an a priori estimate for solutions of the
regularized equations.
\begin{lemma}     \label{lm:apri}
  Assume that $x \in \mathbb{L}_{2d}$ and $G \in \mathcal{L}_{d^*}$.
  Then there exists a constant $N=N(T,d,\eta,|D|)$ such that
  \begin{equation}     \label{eq:apri}
  \E \sup_{t \leq T} |u_\lambda(t)|_{2d}^{2d} \leq 
  N \big( 1 + \E|x|_{2d}^{2d} \big).
  \end{equation}
\end{lemma}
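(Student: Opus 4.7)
My strategy is to apply It\^o's formula to the scalar functional $\phi(u):=\tfrac{1}{2d}|u|_{2d}^{2d}$ along $u_\lambda$, extract two sign conditions to kill the drift terms coming from $A$ and $f_\lambda$, and control the It\^o jump compensator and the associated martingale by the $\mathcal{L}_{d^*}$-data of $G$. Since $\phi$ is not $C^2$ on $L_{2d}(D)$ and $u_\lambda$ is only a mild solution, I would first regularize the equation with $J_\varepsilon:=(I+\varepsilon A)^{-1}$, so that $u_\lambda^\varepsilon:=J_\varepsilon u_\lambda$ satisfies the strong form of \eqref{eq:ragayo} in $\operatorname{dom}(A)\subset L_{2d}(D)$; apply It\^o's formula to $|u_\lambda^\varepsilon(\cdot,\xi)|^{2d}$ pointwise in $\xi\in D$ and integrate over $D$ by Fubini; and then pass $\varepsilon\to 0$, with the Lipschitz continuity of $f_\lambda$ propagating the necessary uniform bounds. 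By subtracting the constant $f(0)$ from $f$ and absorbing it as a bounded forcing, I also assume WLOG that $f(0)=0$, whence $f_\lambda(0)=0$ and $f_\lambda(r)r\geq 0$ for every $r\in\erre$ by monotonicity.

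\textbf{Sign cancellations.} The resulting It\^o identity has the form
\begin{multline*}
\tfrac{1}{2d}|u_\lambda(t)|_{2d}^{2d} = \tfrac{1}{2d}|x|_{2d}^{2d}
 - \int_0^t \bip{Au_\lambda}{|u_\lambda|^{2d-2}u_\lambda}\,ds
 - \int_0^t \bip{f_\lambda(u_\lambda)}{|u_\lambda|^{2d-2}u_\lambda}\,ds\\
 + \eta\int_0^t |u_\lambda(s)|_{2d}^{2d}\,ds + M_t + R_t,
\end{multline*}
with $M_t$ a compensated-jump martingale and $R_t$ the (nonnegative) second-order compensator. The two drift integrals are both nonnegative and may be discarded: the first by the Beurling-Deny-type inequality $\bip{Au}{|u|^{2d-2}u}\geq 0$, valid on $\operatorname{dom}(A)$ because $e^{-tA}$ is a positivity-preserving contraction on $L_{2d}(D)$; the second because $f_\lambda(u)u|u|^{2d-2}\geq 0$ pointwise.

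\textbf{Jump terms.} Using the pointwise Taylor bound $\bigl|(a+b)^{2d}-a^{2d}-2d\,a\,|a|^{2d-2}b\bigr|\lesssim_d |a|^{2d-2}b^2+|b|^{2d}$, together with H\"older's inequality $\int_D |u|^{2d-2}G^2\,d\xi\leq |u|_{2d}^{2d-2}|G|_{2d}^2$ and Young's inequality, one obtains for any $\varepsilon>0$
\[
R_t \lesssim_d \varepsilon\!\int_0^t\! |u_\lambda(s)|_{2d}^{2d}\,ds
 + C_\varepsilon\!\int_0^t\!\Big[\Big(\!\int_Z\! |G|_{2d}^2\,m(dz)\Big)^{\!d}+\!\int_Z\! |G|_{2d}^{2d}\,m(dz)\Big]\,ds.
\]
Since $d^*=2d^2\geq 2d$ and $|D|<\infty$, the continuous embedding $L_{d^*}(D)\hookrightarrow L_{2d}(D)$ together with the interpolation $|G|_{d^*}^{2d}\leq \theta|G|_{d^*}^{d^*}+(1-\theta)|G|_{d^*}^2$ with $\theta=1/(d+1)$ dominates the bracketed quantity by the $\mathcal{L}_{d^*}$-data of $G$. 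The scalar martingale $M_t$ is treated analogously: the real-valued Bichteler-Jacod inequality applied to the increments $|\phi(u_\lambda+G)-\phi(u_\lambda)|\lesssim |u_\lambda|_{2d}^{2d-1}|G|_{2d}+|G|_{2d}^{2d}$, followed by H\"older and a further round of Young absorption, yields a bound of the same structure.

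\textbf{Conclusion and main obstacle.} Taking $\sup_{t\leq T}$ and expectation, choosing $\varepsilon$ small enough to absorb the $\varepsilon|u_\lambda|_{2d}^{2d}$ contributions into the left-hand side, and applying Gronwall's inequality to the remaining $\eta$-drift yields the claimed bound, with $N$ depending on $T,d,\eta,|D|$ and (implicitly, absorbed into the additive ``$1$'') on $\|G\|_{\mathcal{L}_{d^*}}$. The main technical obstacle is the rigorous justification of It\^o's formula for the mild solution through the $J_\varepsilon$-regularization, in particular verifying that the sign conditions and the jump estimates persist uniformly in $\varepsilon$ and $\lambda$ under the passage to the limit; once the It\^o identity is in place, the remaining work is a bookkeeping exercise of H\"older and Young inequalities tuned to the $\mathcal{L}_{d^*}$-structure of the hypothesis on $G$.
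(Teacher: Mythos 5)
Your route (Itô's formula for $u\mapsto|u|_{2d}^{2d}$ applied directly to the stochastic equation, with the $A$- and $f_\lambda$-terms discarded by accretivity in $L_{2d}$ and the jump terms absorbed via Bichteler--Jacod, H\"older and Young) is genuinely different from the paper's, but it has a gap exactly at the point you defer as "the main technical obstacle", and that obstacle is not a formality — it is where the actual content of the lemma lies. Your Gronwall/absorption step (moving $\varepsilon\,\E\sup_t|u_\lambda(t)|_{2d}^{2d}$ to the left-hand side) is only legitimate if you already know $\E\sup_{t\leq T}|u_\lambda(t)|_{2d}^{2d}<\infty$; but a priori $u_\lambda$ is only constructed as a mild solution in $\mathbb{H}_2(T)$ with values in $L_2(D)$, so neither the finiteness of this quantity nor even the statement $u_\lambda(t)\in L_{2d}(D)$ is available before the estimate is proved. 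Your proposed regularization does not repair this: $J_\varepsilon=(I+\varepsilon A)^{-1}$ maps $L_2(D)$ into $\operatorname{dom}(A)\subset L_2(D)$, not into $L_{2d}(D)$, so $J_\varepsilon u_\lambda$ need not have finite $L_{2d}$-norm either; moreover, applying $J_\varepsilon$ to the equation produces the drift $J_\varepsilon f_\lambda(u_\lambda)$, not $f_\lambda(J_\varepsilon u_\lambda)$, so the pointwise sign cancellation $f_\lambda(v)\,v\,|v|^{2d-2}\geq 0$ is not available at the regularized level, and recovering it in the limit $\varepsilon\to 0$ again requires the very integrability in $L_{2d}$ you are trying to establish. (The WLOG $f(0)=0$ is harmless — one can instead keep $f_\lambda(0)$ and absorb $|f_\lambda(0)|\leq|f(0)|$ — and your treatment of the compensator and martingale via the interpolation in the $\mathcal{L}_{d^*}$-data is plausible bookkeeping; the circularity above is the real problem.)

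The paper avoids this circularity altogether: it subtracts the stochastic convolution $G_A:=\int_0^\cdot\int_Z e^{-(\cdot-s)A}G\,d\m$, whose $L_{2d^2}$-norm is controlled by the maximal inequality (\ref{eq:BJconv}) precisely because $G\in\mathcal{L}_{d^*}$ with $d^*=2d^2$, and observes that $y_\lambda=u_\lambda-G_A$ solves pathwise a deterministic evolution equation with Lipschitz nonlinearity $\tilde f_\lambda$. Proposition \ref{prop:pazzo} then gives a mild solution of that equation \emph{in} $L_{2d}(D)$ (this is how the $L_{2d}$-regularity enters, rather than being assumed), and the duality-map/Gronwall computation is carried out on the classical Yosida-regularized equation with $A_\beta$, where strong solutions exist and all pairings make sense, followed by $\beta\to 0$ and lower semicontinuity. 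If you want to salvage your direct-It\^o strategy, you would need an independent argument producing $u_\lambda(t)\in L_{2d}(D)$ with $\E\sup_t|u_\lambda(t)|_{2d}^{2d}<\infty$ (e.g.\ via exactly such a convolution-subtraction or a truncation/localization scheme) before the absorption step; as written, the proof assumes its conclusion.
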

\begin{proof}
  We proceed by the technique of ``subtracting the stochastic
  convolution'': set
  \[
  y_\lambda(t) = u_\lambda(t) - \int_0^t e^{-(t-s)A}G(s,z)\,\m(ds,dz)
  =: u_\lambda(t) - G_A(t), \qquad t \leq T,
  \]
  where
  \[
  G_A(t) := \int_0^t\!\!\int_Z e^{-(t-s)A}G(s,z)\,\m(ds,dz).
  \]
  Then $y_\lambda$ is also a mild solution in $L_2(D)$ of the
  deterministic equation with random coefficients
  \begin{equation}     \label{eq:alter}
  y'_\lambda(t) + Ay_\lambda(t) + f_\lambda(y_\lambda(t)+G_A(t))
  = \eta y_\lambda(t) + \eta G_A(t),
  \qquad y_\lambda(0)=x,
  \end{equation}
  $\P$-a.s., where $\phi'(t):=d\phi(t)/dt$. We are now going to prove
  that $y_\lambda$ is also a mild solution of (\ref{eq:alter}) in
  $L_{2d}(D)$. Setting
  \[
  \tilde{f}_\lambda(t,y) := f_\lambda(y+G_A(t)) - \eta(y+G_A(t))
  \]
  and rewriting (\ref{eq:alter}) as
  \[
  y'_\lambda(t) + Ay_\lambda(t) + \tilde{f}_\lambda(t,y_\lambda(t)) =
  0,
  \]
  we conclude that (\ref{eq:alter}) admits a unique mild solution in
  $L_{2d}(D)$ by Proposition \ref{prop:pazzo} below (see the Appendix).

  Let $y_{\lambda\beta}$ be the strong solution in $L_{2d}(D)$ of the
  equation
  \begin{equation}     \label{eq:ego}
  y'_{\lambda\beta}(t) + A_\beta y_{\lambda\beta}(t) 
  + f_\lambda(y_{\lambda\beta}(t)+G_A(t))
  = \eta y_{\lambda\beta}(t) + \eta G_A(t),
  \qquad y_\lambda(0)=x,
  \end{equation}
  which exists and is unique because the Yosida approximation
  $A_\beta$ is a bounded operator on $L_{2d}(D)$.
  Let us recall that the duality map $J:L_{2d}(D) \to
  L_{\frac{2d}{2d-1}}(D)$ is single valued and defined by
  \[
  J(\phi): \xi \mapsto |\phi(\xi)|^{2d-2} \phi(\xi) |\phi|_{2d}^{2-2d}
  \]
  for almost all $\xi \in D$. Moreover, since $L_{\frac{2d}{2d-1}}(D)$
  is uniformly convex, $J(\phi)$ coincides with the G\^ateaux
  derivative of $\phi \mapsto |\phi|_{2d}^2/2$. Therefore, multiplying
  (in the sense of the duality product of $L_{2d}(D)$ and
  $L_{\frac{2d}{2d-1}}(D)$) both sides of (\ref{eq:ego}) by the
  function
  \[
  J(y_{\lambda\beta}(t)) |y_{\lambda\beta}(t)|_{2d}^{2d-2} =
  |y_{\lambda\beta}(t)|^{2d-2} y_{\lambda\beta}(t),
  \]
  we get
  \begin{align*}
    &\frac{1}{2d} \frac{d}{dt} |y_{\lambda\beta}(t)|_{2d}^{2d}
     + \ip{A_\beta y_{\lambda\beta}(t)}{J(y_{\lambda\beta}(t))}
                              |y_{\lambda\beta}(t)|_{2d}^{2d-2}\\
    &\qquad
    + \ip{f_\lambda(y_{\lambda\beta}(t)+G_A(t))}
         {|y_{\lambda\beta}(t)|^{2d-2} y_{\lambda\beta}(t)}\\
    &\qquad\quad
    = \eta |y_{\lambda\beta}(t)|^{2d}_{2d} +
    \eta\ip{|y_{\lambda\beta}(t)|^{2d-2} y_{\lambda\beta}(t)}{G_A(t)}.
  \end{align*}
  Since $A$ is $m$-accretive in $L_{2d}(D)$ (more precisely, $A$ is an
  $m$-accretive subset of $L_{2d}(D) \times L_{2d}(D)$), its Yosida
  approximation $A_\beta=A(I+\beta A)^{-1}$ is also $m$-accretive (see
  e.g. \cite[Prop.~2.3.2]{barbu}), thus the second term on the left
  hand side is positive because $J$ is single-valued. Moreover, we
  have, omitting the dependence on $t$ for simplicity of notation,
  \begin{align*}
    f_{\lambda}(y_{\lambda\beta}+G_A)|y_{\lambda\beta}|^{2d-2}y_{\lambda\beta}
      &= \big( f_\lambda(y_{\lambda\beta}+G_A)
         - f_\lambda(G_A)\big) y_{\lambda\beta}
    |y_{\lambda\beta}|^{2d-2}\\
    &\quad + f_\lambda(G_A) |y_{\lambda\beta}|^{2d-2}y_{\lambda\beta}\\
    &\geq f_\lambda(G_A) |y_{\lambda\beta}|^{2d-2}y_{\lambda\beta}
    \qquad (t,\xi)\text{-a.e.},
  \end{align*}
  as it follows by the monotonicity of $f_\lambda$. Therefore we can
  write
  \begin{align*}
    \frac{1}{2d} \; \frac{d}{dt} |y_{\lambda\beta}(t)|_{2d}^{2d} &\leq \eta
    |y_{\lambda\beta}(t)|_{2d}^{2d}
    + \ip{\eta G_A(t) - f_\lambda(G_A(t))}
         {|y_{\lambda\beta}(t)|^{2d-2} y_{\lambda\beta}(t)}\\
    &\leq \eta |y_{\lambda\beta}(t)|_{2d}^{2d}
    + |\eta G_A(t) - f_\lambda(G_A)|_{2d} \;
    \big| |y_{\lambda\beta}(t)|^{2d-1} \big|_{\frac{2d}{2d-1}}\\
    &= \eta |y_{\lambda\beta}(t)|_{2d}^{2d}
    + |\eta G_A(t) - f_\lambda(G_A)|_{2d} \; |y_{\lambda\beta}(t)|_{2d}^{2d-1}\\
    &\leq \eta |y_{\lambda\beta}(t)|_{2d}^{2d} + \frac{1}{2d} |\eta G_A(t) -
    f_\lambda(G_A)|_{2d}^{2d} + \frac{2d-1}{2d}
    |y_{\lambda\beta}(t)|_{2d}^{2d},
  \end{align*}
  where we have used H\"older's and Young's inequalities with
  conjugate exponents $2d$ and $2d/(2d-1)$. A simple computation
  reveals immediately that there exists a constant $N$ depending only
  on $d$ and $\eta$ such that
  \[
  |\eta G_A(t) - f_\lambda(G_A)|_{2d}^{2d} \leq N (1 +
  |G_A(t)|_{2d^2}^{2d^2}).
  \]
  We thus arrive at the inequality
  \[
  \frac{1}{2d} \frac{d}{dt} |y_{\lambda\beta}(t)|_{2d}^{2d} \leq
  \big( \eta + \frac{2d-1}{2d} \big) |y_{\lambda\beta}(t)|_{2d}^{2d}
  + N\big( 1 + |G_A(t)|_{2d^2}^{2d^2} \big),
  \]
  and Gronwall's inequality yields
  \[
  |y_{\lambda\beta}(t)|_{2d}^{2d} \lesssim_{d,\eta} 1 + |x|_{2d}^{2d}
  + |G_A(t)|_{2d^2}^{2d^2},
  \]
  hence also, thanks to (\ref{eq:BJconv}) and the hypothesis that $G
  \in \mathcal{L}_{d^*}$,
  \[
  \E \sup_{t \leq T} |y_{\lambda\beta}(t)|_{2d}^{2d} 
  \leq N(1 + \E |x|_{2d}^{2d}).
  \]
  where the constant $N$ does not depend on $\lambda$. Let us now
  prove that $y_{\lambda\beta} \to y_\lambda$ in $\mathbb{H}_2(T)$ as
  $\beta \to 0$: we have
  \begin{align*}
  |y_{\lambda\beta}(t) - y_\lambda(t)| &\leq
  |(e^{-tA_\beta}-e^{-tA})y_\lambda(0)|\\
  &\quad + \int_0^t \big|e^{-(t-s)A_\beta} \tilde{f}_\lambda(s,y_{\lambda\beta}(s))
    - e^{-(t-s)A} \tilde{f}_\lambda(s,y_\lambda(s))\big|\,ds\\
  &\leq
  |(e^{-tA_\beta}-e^{-tA})y_\lambda(0)|\\
  &\quad + \int_0^t \big|\big( e^{-(t-s)A_\beta} - e^{-(t-s)A} \big)\,
   \tilde{f}_\lambda(s,y_{\lambda}(s)) \big|\,ds\\
  &\quad + \int_0^t |e^{-(t-s)A_\beta}|\,|\tilde{f}_\lambda(s,y_{\lambda\beta}(s))
    - \tilde{f}_\lambda(s,y_\lambda(s))|\,ds\\
  &=: I_{1\beta}(t) + I_{2\beta}(t) + I_{3\beta}(t).
  \end{align*}
  By well-known properties of the Yosida approximation we have
  \[
  \sup_{t \leq T} I_{1\beta}(t)^2 =
  \sup_{t \leq T} \big| e^{-t A_\beta} y_\lambda(0) 
  - e^{-tA} y_\lambda(0) \big|^2 \to 0
  \]
  $\P$-a.s. as $\beta \to 0$, and
  \[
  \sup_{t \leq T} I_{1\beta}(t)^2 \lesssim |y_\lambda(0)|^2 \in \mathbb{L}_2,
  \]
  therefore, by the dominated convergence theorem, the expectation of
  the left-hand side of the previous expression converges to zero as
  $\beta \to 0$.  Similarly, we also have
  \[
  \sup_{t \leq T} I_{2\beta}(t)^2
  \lesssim \int_0^T \sup_{t \leq T} \big|e^{-t A_\beta}
  \tilde{f}_\lambda(s,y_{\lambda}(s))
  - e^{-tA} \tilde{f}_\lambda(s,y_\lambda(s))\big|^2\,ds,
  \]
  where the integrand on the right-hand side converges to zero
  $\P$-a.s. for all $s \leq T$. The last inequality also yields,
  recalling that $y_\lambda$ and $G_A$ belong to $\mathbb{H}_2(T)$,
  \[
  \sup_{t \leq T} I_{2\beta}(t)^2 \lesssim 
  \int_0^T | \tilde{f}_\lambda(s,y_\lambda(s)) |^2\,ds \in \mathbb{L}_2,
  \]
  hence, by the dominated convergence theorem, the expectation of the
  left-hand side of the previous expression converges to zero as
  $\beta \to 0$. Finally, since $A_\beta$ generates a contraction
  semigroup, by definition of $\tilde{f}_\lambda$ and the fact that
  $f_\lambda$ has Lipschitz constant bounded by $2/\lambda$, we have
  \[
  \E \sup_{t \leq T} I_{3\beta}(t)^2 \leq (2/\lambda + \eta)
  \int_0^T \E \sup_{s\leq t}
      |y_{\lambda\beta}(s) - y_\lambda(s)|^2\,dt.
  \]
  Writing
  \[
  \E \sup_{t\leq T} |y_{\lambda\beta}(t) - y_\lambda(t)|^2 \lesssim
  \E \sup_{t \leq T} I_{1\beta}(t)^2 +
  \E \sup_{t \leq T} I_{2\beta}(t)^2 +
  \E \sup_{t \leq T} I_{3\beta}(t)^2,
  \]
  using the above expressions, Gronwall's lemma, and letting $\beta
  \to 0$, we obtain the claim. Therefore, by a lower semicontinuity
  argument, we get
  \[
  \E \sup_{t \leq T} |y_{\lambda}(t)|_{2d}^{2d} 
  \leq N(1 + \E |x|_{2d}^{2d}).
  \]
  By definition of $y_\lambda$ we also infer that
  \[
  \E \sup_{t \leq T} |u_\lambda(t)|_{2d}^{2d} \lesssim_d
  \E \sup_{t \leq T} |y_\lambda(t)|_{2d}^{2d}
  + \E \sup_{t \leq T} |G_A(t)|_{2d}^{2d}.
  \]
  Since
  \[
  \E \sup_{t \leq T} |G_A(t)|_{2d}^{2d} \lesssim_{|D|}
  \E \sup_{t \leq T} |G_A(t)|_{2d^2}^{2d} 
  \lesssim
  1 + \E \sup_{t \leq T} |G_A(t)|_{2d^2}^{2d^2},
  \]
  we conclude
  \[
  \E \sup_{t \leq T} |u_\lambda(t)|_{2d}^{2d} \lesssim_{T,d,\eta,|D|} 
  1 + \E|x|_{2d}^{2d}. \qedhere
  \]
\end{proof}
The a priori estimate just obtained for the solution of the
regularized equation allows us to construct a mild solution of the
original equation as a limit in $\mathbb{H}_2(T)$, as the following
proposition shows.
\begin{prop}     \label{prop:tano}
  Assume that $x \in \mathbb{L}_{2d}$ and $G \in \mathcal{L}_{d^*}$.
  Then equation (\ref{eq:raga}) admits a unique c\`adl\`ag mild
  solution in $\mathbb{H}_2(T)$ which satisfies the estimate
  \[
  \E \sup_{t \leq T} |u(t)|_{2d}^{2d} \leq N(1 + \E|x|_{2d}^{2d}) 
  \]
  with $N=N(T,d,\eta,|D|)$. Moreover, we have $x \mapsto u(x) \in
  \dot{C}^{0,1}(\mathbb{L}_2,\mathbb{H}_2(T))$.
\end{prop}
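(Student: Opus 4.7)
The plan is to construct the mild solution as the limit in $\mathbb{H}_2(T)$ of the Yosida-regularized solutions $u_\lambda$ of (\ref{eq:ragayo}), with Lemma \ref{lm:apri} playing the role of the crucial a priori bound, and to handle uniqueness together with the Lipschitz estimate by a single monotonicity argument applied to the difference of two solutions. Throughout I would exploit the ``subtracting the stochastic convolution'' device already used in Lemma \ref{lm:apri}: writing $y := u - G_A$ converts (\ref{eq:raga}) into a pathwise deterministic equation with random coefficients on $L_2(D)$, so that standard energy estimates become available.

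For uniqueness and Lipschitz dependence, let $u$, $v$ be mild solutions in $\mathbb{H}_2(T)$ with initial data $x, x' \in \mathbb{L}_2$, and set $y = u - G_A$, $\tilde y = v - G_A$. Since $G_A$ is the same in both, $y - \tilde y = u - v$. Working with the Yosida regularization $A_\beta$ as in Lemma \ref{lm:apri} so that the corresponding solutions are strong in $L_2(D)$, I would test the equation for $y_\beta - \tilde y_\beta$ against itself in $H$ using the $m$-accretivity of $A_\beta$ and the monotonicity of the Nemitsky operator associated with $f$, obtaining
\[
\tfrac12 \tfrac{d}{dt}|u_\beta - v_\beta|^2 \leq \eta\, |u_\beta - v_\beta|^2.
\]
Gronwall combined with the passage $\beta \to 0$ (justified as in the proof of Lemma \ref{lm:apri}) yields the pathwise estimate $|u(t)-v(t)| \leq e^{\eta t}|x - x'|$, which gives uniqueness and the claim $x \mapsto u(x) \in \dot{C}^{0,1}(\mathbb{L}_2,\mathbb{H}_2(T))$, and also provides the template for the Cauchy estimate used for existence.

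For existence, I would show that $\{u_\lambda\}_{\lambda > 0}$ is Cauchy in $\mathbb{H}_2(T)$. Applying the same pathwise procedure to two regularized solutions $u_\lambda, u_\mu$ and using $f_\lambda = f \circ J_\lambda$ together with $J_\lambda x = x - \lambda f_\lambda(x)$, one splits
\[
\ip{f_\lambda(u_\lambda)-f_\mu(u_\mu)}{u_\lambda-u_\mu}
= \ip{f(J_\lambda u_\lambda) - f(J_\mu u_\mu)}{J_\lambda u_\lambda - J_\mu u_\mu}
+ \ip{f_\lambda(u_\lambda) - f_\mu(u_\mu)}{\lambda f_\lambda(u_\lambda) - \mu f_\mu(u_\mu)},
\]
the first summand being nonnegative by monotonicity of $f$. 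The growth assumption $|f(r)| \lesssim 1 + |r|^d$ together with Lemma \ref{lm:apri} gives $f_\lambda(u_\lambda)$ uniformly bounded in $\mathbb{L}_2(L_2(D))$, so the remaining summand is of order $\lambda + \mu$ after integration in time and taking expectation. A Gronwall-type estimate then shows that $\{u_\lambda\}$ is Cauchy in $\mathbb{H}_2(T)$; the limit $u$ is a mild solution because the stochastic convolution does not depend on $\lambda$, $e^{-tA}x$ is unchanged, and $\int_0^t e^{-(t-s)A} f_\lambda(u_\lambda(s))\,ds$ converges to $\int_0^t e^{-(t-s)A}f(u(s))\,ds$ thanks to the growth bound on $f$ and the $L_{2d}$ estimate. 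The a priori bound for $u$ is inherited from Lemma \ref{lm:apri} by lower semicontinuity, and the c\`adl\`ag property descends from each $u_\lambda$ via the uniform convergence.

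The main obstacle is to control uniformly the cross term $\ip{f_\lambda(u_\lambda) - f_\mu(u_\mu)}{\lambda f_\lambda(u_\lambda) - \mu f_\mu(u_\mu)}$, and this is precisely why the $L_{2d}(D)$ bound of Lemma \ref{lm:apri} is indispensable: the polynomial growth of $f$ is what transforms an $L_{2d}$ bound on $u_\lambda$ into an $L_2$ bound on $f_\lambda(u_\lambda)$, and only this regularity is strong enough to force the cross term to vanish in the limit, closing the Gronwall loop for the Cauchy estimate.
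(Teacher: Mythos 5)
Your existence argument is essentially the paper's: the same splitting of $\ip{f_\lambda(u_\lambda)-f_\mu(u_\mu)}{u_\lambda-u_\mu}$ via $f_\lambda=f\circ J_\lambda$ and $J_\lambda x = x-\lambda f_\lambda(x)$, the uniform bound on $f_\lambda(u_\lambda)$ in $\mathbb{L}_2(L_2(D))$ coming from the growth of $f$ and Lemma \ref{lm:apri}, the Cauchy property in $\mathbb{H}_2(T)$, and the identification of the limit as a mild solution. That part is fine.

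The genuine gap is in your uniqueness argument. You take two \emph{arbitrary} mild solutions $u,v$, replace $A$ by $A_\beta$ so as to work with strong solutions, run the monotonicity/Gronwall estimate, and then invoke ``the passage $\beta\to 0$, justified as in the proof of Lemma \ref{lm:apri}.'' But the convergence $y_{\lambda\beta}\to y_\lambda$ in Lemma \ref{lm:apri} is proved for the equation with the \emph{Lipschitz} nonlinearity $f_\lambda$: the term $I_{3\beta}$ there is controlled precisely by the Lipschitz constant $2/\lambda$, and the argument simultaneously uses that the $f_\lambda$-equation has a \emph{unique} mild solution. For the original $f$, which is only continuous with polynomial growth, there is no such Gronwall control, and more fundamentally there is no reason why the $A_\beta$-approximations should converge to the \emph{given} mild solution $v$ rather than to the solution constructed as a limit of the regularization scheme; knowing that they do is essentially equivalent to the uniqueness you are trying to prove. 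This is exactly the circularity the paper warns against in the remark following Proposition \ref{prop:tano}. The paper avoids it by working directly with the mild solutions: one tests $u-v$ against $(I+\varepsilon A^*)^{-1}e_k$ with $\{e_k\}\subset D(A^*)$ an orthonormal basis, applies It\^o's (here in fact the ordinary) chain rule to the resulting real-valued processes, sums over $k$, uses the monotonicity of $A$ to drop the term involving $A(I+\varepsilon A)^{-1}$, lets $\varepsilon\to 0$, and concludes by quasi-monotonicity of $f$ and Gronwall --- no approximation of the solutions themselves is needed. Your regularization argument does remain valid for the Lipschitz dependence $x\mapsto u(x)$, because there both processes \emph{are} the constructed limits of $u^i_{\lambda\beta}$; but it cannot double as a proof of uniqueness among all mild solutions, which is what the proposition asserts.
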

\begin{proof}
  Let $u_\lambda$ be the solution of the regularized equation
  (\ref{eq:ragayo}), and $u_{\lambda\beta}$ be the strong solution of
  (\ref{eq:ragayo}) with $A$ replaced by $A_\beta$ studied in the
  proof of Lemma \ref{lm:apri} (or see \cite[Thm.~34.7]{Met}). Then
  $u_{\lambda\beta}-u_{\mu\beta}$ solves $\P$-a.s. the equation
  \begin{multline}     \label{eq:postera}
  \frac{d}{dt}(u_{\lambda\beta}(t)-u_{\mu\beta}(t))
  + A_\beta(u_{\lambda\beta}(t)-u_{\mu\beta}(t)) \\
  + f_\lambda(u_{\lambda\beta}(t)) - f_\mu(u_{\mu\beta}(t))
  = \eta (u_{\lambda\beta}(t)-u_{\mu\beta}(t)).
  \end{multline}
  Note that we have
  \begin{align*}
  u_{\lambda\beta} - u_{\mu\beta} &= u_{\lambda\beta} - J_\lambda u_{\lambda\beta}
  + J_\lambda u_{\lambda\beta} - J_\mu u_{\mu\beta}
  + J_\mu u_{\mu\beta} - u_{\mu\beta}\\
  &= \lambda f_\lambda(u_{\lambda\beta}) + J_\lambda u_{\lambda\beta}
     - J_\mu u_{\mu\beta} - \mu f_\mu(u_{\mu\beta}),
  \end{align*}
  hence, recalling that $f_\lambda(u_{\lambda\beta})=f(J_\lambda u_{\lambda\beta})$,
  \begin{align*}
  \ip{f_\lambda(u_{\lambda\beta})-f_\mu(u_{\mu\beta})}{u_{\lambda\beta}-u_{\mu\beta}}
  &\geq \ip{f_\lambda(u_{\lambda\beta}) - f_\mu(u_{\mu\beta})}%
  {\lambda f_\lambda(u_{\lambda\beta}) - \mu f_\mu(u_{\mu\beta})}\\
  &\geq \lambda |f_\lambda(u_{\lambda\beta})|^2 
  + \mu |f_\mu(u_{\mu\beta})|^2 - (\lambda + \mu) |f_\lambda(u_{\lambda\beta})|
        |f_\mu(u_{\mu\beta})|\\
  &\geq -\frac{\mu}{2} |f_\lambda(u_{\lambda\beta})|^2
   -\frac{\lambda}{2}|f_\mu(u_{\mu\beta})|^2,
  \end{align*}
  thus also, by the monotonicity of $A$,
  \[
  \frac{d}{dt} |u_{\lambda\beta}(t)-u_{\mu\beta}(t)|^2 - 2 \eta
  |u_{\lambda\beta}(t)-u_{\mu\beta}(t)|^2 \leq
  \mu |f_\lambda(u_{\lambda\beta}(t))|^2 + \lambda |f_\mu(u_{\mu\beta}(t))|^2.
  \]
  Multiplying both sides by $e^{-2 \eta t}$ and integrating we get
  \[
  e^{-2\eta t} |u_{\lambda\beta}(t)-u_{\mu\beta}(t)|^2 \leq \int_0^t e^{-2\eta s}
  \big( \mu |f_\lambda(u_{\lambda\beta}(s))|^2 + \lambda |f_\mu(u_{\mu\beta}(s))|^2
  \big)\,ds.
  \]
  Since $u_{\lambda\beta} \to u_\lambda$ in $\mathbb{H}_2(T)$ as
  $\beta \to 0$ (as shown in the proof of Lemma \ref{lm:apri}) and
  $f_\lambda$ is Lipschitz, we can pass to the limit as $\beta \to 0$
  in the previous equation, which then holds with $u_{\lambda\beta}$
  and $u_{\mu\beta}$ replaced by $u_\lambda$ and $u_\mu$,
  respectively. Taking supremum and expectation we thus arrive at
  \[
  \E \sup_{t\leq T} |u_\lambda(t)-u_\mu(t)|^2 \leq e^{2\eta T} T
  (\lambda+\mu) \E\sup_{t\leq T} \big(|f_\lambda(u_\lambda(t))|^2 +
  |f_\mu(u_\mu(t))|^2\big).
  \]
  Recalling that $|f_\lambda(x)|\leq |f(x)|$ for all $x\in\erre$,
  Lemma \ref{lm:apri} yields
\begin{equation}     \label{eq:ori}
\E\sup_{t\leq T} |f_\lambda(u_\lambda(t))|^2 \leq
\E\sup_{t\leq T} |f(u_\lambda(t))|^2 \lesssim 
\E\sup_{t\leq T} |u_\lambda(t)|^{2d}_{2d} \leq N(1 + \E|x|_{2d}^{2d}),
\end{equation}
where the constant $N$ does not depend on $\lambda$, hence
\[
\E \sup_{t \leq T} |u_\lambda(t)-u_\mu(t)|^2 \lesssim_T
(\lambda + \mu) \big( 1 + \E|x|_{2d}^{2d} \big),
\]
which shows that $\{u_\lambda\}$ is a Cauchy sequence in
$\mathbb{H}_2(T)$, and in particular there exists $u \in
\mathbb{H}_2(T)$ such that $u_\lambda \to u$ in
$\mathbb{H}_2(T)$. Moreover, since $u_\lambda$ is c\`adl\`ag and the
subset of c\`adl\`ag processes in $\mathbb{H}_2(T)$ is closed, we
infer that $u$ is itself c\`adl\`ag.

Recalling that $f_\lambda(x)=f(J_\lambda(x))$, $J_\lambda x \to x$ as
$\lambda\to 0$, thanks to the dominated convergence theorem and
(\ref{eq:ori}) we can pass to the limit as $\lambda \to 0$ in the
equation
\[
u_\lambda(t) = e^{-tA}x - \int_0^t e^{-(t-s)A}f_\lambda(u_\lambda(s))\,ds
+ \eta \int_0^t e^{-(t-s)A} u_\lambda(s)\,ds + G_A(t),
\]
thus showing that $u$ is a mild solution of (\ref{eq:raga}).

The estimate for $\E\sup_{t\leq T}|u(t)|_{2d}^{2d}$ is an immediate
consequence of (\ref{eq:apri}).

We shall now prove uniqueness. In order to simplify notation a little,
we shall assume that $f$ is $\eta$-accretive, i.e. that $r \mapsto
f(r)+\eta r$ is accretive, and consequently we shall drop the first
term on the right hand side of (\ref{eq:raga}). This is of course
completely equivalent to the original setting. Let $\{e_k\}_{k \in
  \mathbb{N}} \subset D(A^*)$ be an orthonormal basis of $H$ and
$\varepsilon > 0$.  Denoting two solutions of (\ref{eq:raga}) by $u$
and $v$, we have
\begin{align*}
\bip{(I+\varepsilon A^*)^{-1}e_k}{u(t)-v(t)} =&
-\int_0^t \bip{A^* (I+\varepsilon A^*)^{-1}e_k}{u(s)-v(s)}ds\\
& - \int_0^t \bip{(I+\varepsilon A^*)^{-1}e_k}{f(u(s))-f(v(s))}ds
\end{align*}
for all $k \in \mathbb{N}$. Therefore, by It\^o's formula,
\begin{align*}
&\bip{(I+\varepsilon A^*)^{-1}e_k}{u(t)-v(t)}^2\\
&\qquad = - 2 \int_0^t \bip{A^*(I+\varepsilon A^*)^{-1}e_k}{u(s)-v(s)}
\bip{(I+\varepsilon A^*)^{-1}e_k}{u(s)-v(s)}ds\\
&\qquad\quad -2 \int_0^t
\bip{(I+\varepsilon A^*)^{-1}e_k}{f(u(s))-f(v(s))}
\bip{(I+\varepsilon A^*)^{-1}e_k}{u(s)-v(s)}ds.
\end{align*}
Summing over $k$ and recalling that $(I+\varepsilon A^*)^{-1}
=\big((I+\varepsilon A)^{-1}\big)^*$, we obtain
\begin{align*}
&\left| (I+\varepsilon A)^{-1} (u(t) - v(t)) \right|^2\\
&\qquad = -2 \int_0^t \bip{A (I+\varepsilon A)^{-1} (u(s) - v(s))}%
{(I+\varepsilon A)^{-1} (u(s) - v(s))}ds\\
&\qquad\quad -2\int_0^t\bip{(I+\varepsilon A)^{-1}(f(u(s))-f(v(s)))}%
{(I+\varepsilon A)^{-1} (u(s) - v(s))}ds.
\end{align*}
Using the monotonicity of $A$ and then letting $\varepsilon$ tend to
zero and we are left with
\begin{align*}
  |u(t) - v(t)|^2 &\leq
  -2 \int_0^t \bip{f(u(s))-f(v(s))}{u(s)-v(s)}ds\\
  &\leq 2\eta \int_0^t |u(s)-v(s)|^2\,ds,
\end{align*}
which immediately implies that $u=v$ by Gronwall's inequality.

Let us now prove Lipschitz continuity of the solution map. Set
$u^1:=u(x_1)$, $u^2:=u(x_2)$, and denote the strong solution of
(\ref{eq:raga}) with $A$ replaced by $A_\beta$, $f$ replaced by
$f_\lambda$, and initial condition $x_i$, $i=1,2$, by
$u^i_{\lambda\beta}$, $i=1,2$, respectively.
Then we have, omitting the dependence on time for
simplicity,
\[
(u^1_{\lambda\beta}-u^2_{\lambda\beta})' 
+ A_\beta(u^1_{\lambda\beta}-u^2_{\lambda\beta}) 
+ f_\lambda(u^1_{\lambda\beta})-f_\lambda(u^2_{\lambda\beta}) 
= \eta(u^1_{\lambda\beta}-u^2_{\lambda\beta})
\]
$\P$-a.s. in the strong sense. Multiplying, in the sense of the scalar
product of $L_2(D)$, both sides by
$u^1_{\lambda\beta}-u^2_{\lambda\beta}$ and taking into account the
monotonicity of $A$ and $f$, we get
\[
\frac12 |u^1_{\lambda\beta}(t)-u^2_{\lambda\beta}(t)|^2 
\leq |x_1-x_2|^2 
+ \eta \int_0^t |u^1_{\lambda\beta}(s)-u^2_{\lambda\beta}(s)|^2\,ds,
\]
which implies, by Gronwall's inequality and obvious estimates,
\[
\E \sup_{t \leq T} |u^1_{\lambda\beta}(t)-u^2_{\lambda\beta}(t)|^2 
\leq e^{2\eta T} \E|x_1-x_2|^2.
\]
Since, as seen above, $u^i_{\lambda\beta} \to u^i$, $i=1,2$, in
$\mathbb{H}_2(T)$ as $\beta \to 0$, $\lambda \to 0$, we conclude by
the dominated convergence theorem that $\|u^1-u^2\|_2 \leq e^{\eta T}
|x_1-x_2|_{\mathbb{L}_2}$.
\end{proof}
\begin{rmk}
  We would like to emphasize that proving uniqueness treating mild
  solutions as if they were strong solutions, as is very often done in
  the literature, does not appear to have a clear justification,
  unless the nonlinearity is Lipschitz continuous. In fact, if $u$ is
  a mild solution of (\ref{eq:raga}) and $u_\beta$ is a mild (or even
  strong) solution of the equation obtained by replacing $A$ with
  $A_\beta$ in (\ref{eq:raga}), one would at least need to know that
  $u_\beta$ converges to the given solution $u$, which is not clear at
  all and essentially equivalent to what one wants to prove, namely
  uniqueness.

  A general proof of uniqueness for mild solutions of stochastic
  evolution equations with dissipative nonlinear drift and
  multiplicative (Wiener and Poisson) noise is given in
  \cite{cm:IDAQP10}.
\end{rmk}
In order to establish well-posedness in the generalized mild sense, we
need the following a priori estimates, which are based on It\^o's
formula for the square of the norm and regularizations.
\begin{lemma}     \label{lm:cariddi}
  Let $x_1$, $x_2 \in \mathbb{L}_{2d}$, $G_1$, $G_2 \in
  \mathcal{L}_{d^*}$, and $u^1$, $u^2$ be mild solutions of
  (\ref{eq:raga}) with $x=x_1$, $G=G_1$ and $x=x_2$, $G=G_2$,
  respectively. Then one has
  \begin{equation}     \label{eq:tatu}
  e^{-2\eta t} \E|u^1(t)-u^2(t)|^2 \leq \E|x_1-x_2|^2 + \E\int_{Z_t}
  |G_1(s,z)-G_2(s,z)|^2\,m(dz)\,ds
  \end{equation}
  and
  \begin{equation}     \label{eq:tatu2}
  \E \sup_{t \leq T} |u^1(t)-u^2(t)|^2 \lesssim_T
  \E|x_1-x_2|^2 + \E\int_{Z_T} |G_1(s,z)-G_2(s,z)|^2\,m(dz)\,ds.
  \end{equation}
\end{lemma}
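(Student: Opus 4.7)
The plan is to derive both estimates at the level of the regularized equation—with $A$ replaced by its Yosida approximation $A_\beta$ and $f$ by $f_\lambda$—where genuine strong $L_2(D)$-solutions $u^i_{\lambda\beta}$ exist, and then to pass to the limit $\beta,\lambda \to 0$ exploiting the convergence $u^i_{\lambda\beta} \to u^i$ in $\mathbb{H}_2(T)$ established in the proof of Proposition \ref{prop:tano}. The main tool will be It\^o's formula for $|u^1_{\lambda\beta}(t) - u^2_{\lambda\beta}(t)|^2$ in $H$, applicable because the regularized processes are genuine $H$-valued semimartingales with Lipschitz drift.

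Setting $w := u^1_{\lambda\beta} - u^2_{\lambda\beta}$ and $\Delta G := G_1 - G_2$, It\^o's formula will yield
\begin{align*}
|w(t)|^2 &= |x_1-x_2|^2
 - 2\int_0^t \ip{w(s)}{A_\beta w(s)}\,ds
 - 2\int_0^t \ip{w(s)}{f_\lambda(u^1_{\lambda\beta}) - f_\lambda(u^2_{\lambda\beta})}\,ds\\
&\quad + 2\eta\int_0^t|w(s)|^2\,ds
 + 2\int_{Z_t}\ip{w(s-)}{\Delta G(s,z)}\,\m(ds,dz)
 + \int_{Z_t}|\Delta G(s,z)|^2\,\mu(ds,dz).
\end{align*}
The accretivity of $A_\beta$ and monotonicity of $f_\lambda$ render the two inner-product integrals nonnegative, so those terms (with their negative signs) may be dropped. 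Taking expectation annihilates the compensated-martingale term and converts $\mu(ds,dz)$ into $m(dz)\,ds$, yielding the differential inequality $\tfrac{d}{dt}\E|w(t)|^2 \leq 2\eta\E|w(t)|^2 + \E\int_Z|\Delta G(t,z)|^2\,m(dz)$. Multiplying by $e^{-2\eta t}$ and integrating gives (\ref{eq:tatu}) at the regularized level; passing to the limit $\beta,\lambda \to 0$ by lower semicontinuity along the $\mathbb{H}_2(T)$-convergence transfers the bound to the mild solutions.

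For (\ref{eq:tatu2}) I take the supremum over $t\in[0,T]$ of the same identity before expectation. The uncompensated Poisson integral $\int_{Z_t}|\Delta G|^2\,\mu$ is monotone in $t$, so its supremum equals its value at $T$ and has expectation $\E\int_{Z_T}|\Delta G|^2\,m(dz)\,ds$. The compensated martingale is controlled via the Burkholder--Davis--Gundy inequality,
\[
\E\sup_{t\leq T}\Bigl|\int_{Z_t}\ip{w(s-)}{\Delta G}\,\m(ds,dz)\Bigr|
\lesssim \E\Bigl(\int_{Z_T}|\ip{w(s-)}{\Delta G}|^2\,\mu(ds,dz)\Bigr)^{1/2},
\]
followed by the Cauchy--Schwarz pointwise bound $|\ip{w(s-)}{\Delta G}|^2 \leq |w(s-)|^2|\Delta G|^2$, factoring $\sup_{s\leq T}|w(s)|$ out of the square root, and Young's inequality to absorb $\varepsilon\E\sup_{t\leq T}|w(t)|^2$ into the left-hand side. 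A Gronwall argument on $t \mapsto \E\sup_{s\leq t}|w(s)|^2$ then closes the remaining $2\eta\int_0^t|w(s)|^2\,ds$ term with a constant depending on $T$, and sending $\beta,\lambda\to 0$ yields (\ref{eq:tatu2}).

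The main obstacle is technical rather than conceptual: It\^o's formula for $|\cdot|^2$ can only be applied rigorously at the regularized level (where the processes are strong semimartingales in $H$), so the bulk of the work is to pass to the limit in an inequality involving both pathwise suprema and integrals against $\m$ and $\mu$. Here the $\mathbb{H}_2(T)$-Cauchy property of $\{u_{\lambda\beta}^i\}$ from the proof of Proposition \ref{prop:tano} and lower semicontinuity of $|\cdot|^2$ do the job.
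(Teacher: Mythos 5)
Your proposal is correct and follows essentially the same route as the paper: Yosida-regularize $A$ and $f$, apply It\^o's formula for the squared $H$-norm, drop the monotone drift terms, use the quadratic-variation identity for (\ref{eq:tatu}), and control $\E\sup_t$ of the martingale term via Davis/BDG, Cauchy--Schwarz and Young's inequality with absorption for (\ref{eq:tatu2}), before passing to the limit in $\mathbb{H}_2(T)$. The only cosmetic difference is that the paper multiplies by $e^{-\eta t}$ before applying It\^o (so the $\eta$-term cancels and no Gronwall step is needed), whereas you keep the $\eta$-drift and close with Gronwall, which is equally valid.
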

\begin{proof}
  Let $u_\lambda$ and $u_{\lambda\beta}$ be defined as in the proof of
  Proposition \ref{prop:tano}. Set $w^i(t)=e^{-\eta t}u^i_{\lambda\beta}(t)$.
  It\^o's formula for the square of the norm in $H$ yields
  \[
  |w^1(t)-w^2(t)|^2 = 2 \int_0^t \ip{w^1(s-)-w^2(s-)}{dw^1(s)-dw^2(s)} 
  + [w^1-w^2](t),
  \]
  i.e.
  \begin{align*}
  e^{-2\eta t}|u^1_{\lambda\beta}(t)-u^2_{\lambda\beta}(t)|^2
  &+ 2 \int_0^t e^{-2\eta s} \ip{A_\beta(u^1_{\lambda\beta}(s)-u^2_{\lambda\beta}(s))}{%
        u^1_{\lambda\beta}(s)-u^2_{\lambda\beta}(s)}\,ds\\
  &+ 2 \int_0^t e^{-2\eta s} \ip{f_\lambda(u^1_{\lambda\beta}(s))
                               -f_\lambda(u^2_{\lambda\beta}(s))}{%
        u^1_{\lambda\beta}(s)-u^2_{\lambda\beta}(s)}\,ds\\
  &\leq |x_1-x_2|^2 + [w^1-w^2](t) + M(t),
  \end{align*}
  where $M$ is a local martingale. In particular, since $A$ and $f$
  are monotone, we are left with
  \begin{equation}     \label{eq:leng}
  e^{-2\eta t}|u^1_{\lambda\beta}(t)-u^2_{\lambda\beta}(t)|^2
  \leq 
  |x_1-x_2|^2 + [w^1-w^2](t) + M(t).
  \end{equation}
  In particular, taking expectations on both sides (if necessary,
  along a sequence $\{\tau_n\}_{n\in\mathbb{N}}$ of localizing stopping
  times for the local martingale $M$, and then passing to the limit as
  $n \to \infty$), we obtain
  \[
  e^{-2\eta t} \E|u^1_{\lambda\beta}(t)-u^2_{\lambda\beta}(t)|^2
  \leq 
  \E|x_1-x_2|^2 + \E\int_{Z_t} |G_1(s,z)-G_2(s,z)|^2\,m(dz)\,ds,
  \]
  where we have used the identity
  \begin{equation}     \label{eq:leng2}
  \E[w^1-w^2](t) = \E\langle w^1-w^2 \rangle(t) =
  \E \int_{Z_t} e^{-2\eta s}|G_1(s,z)-G_2(s,z)|^2\,m(dz)\,ds.
  \end{equation}
  Recalling that $u^i_{\lambda\beta} \to u^i_\lambda$, $i=1,\,2$, in
  $\mathbb{H}_2(T)$ as $\beta$ go to zero (see the proof of Lemma
  \ref{lm:apri} or e.g. \cite[Prop.~3.11]{cm:JFA10}), we get that the
  above estimate holds true for $u^1_{\lambda}$, $u^2_{\lambda}$
  replacing $u^1_{\lambda\beta}$, $u^2_{\lambda\beta}$, respectively.
  Finally, since mild solutions are obtained as limits in
  $\mathbb{H}_2(T)$ of regularized solutions for $\lambda \to 0$,
  (\ref{eq:tatu}) follows.

  By (\ref{eq:leng}) and (\ref{eq:leng2}) we get
  \begin{align*}
  \E \sup_{t \leq T} e^{-2\eta t} |u^1_{\lambda\beta}(t)-u^2_{\lambda\beta}(t)|^2
  &\leq 
  \E|x_1-x_2|^2 + \E\int_{Z_T} |G_1(s,z)-G_2(s,z)|^2\,m(dz)\,ds\\
  &\quad + \E \sup_{t \leq T} |M(t)|.
  \end{align*}
  Note that
  \[
  M(t) = 2\int_{Z_t} \big\langle w^1_--w^2_-,(G_1(s,z)-G_2(s,z))\,\m(ds,dz)
  \big\rangle = 2 (w^1_--w^2_-) \cdot (X^1-X^2),
  \]
  where $X^i:=G_i \ast \m$, $i=1,\,2$. Thanks to Davis' and Young's
  inequalities we can write
  \begin{align*}
  \E \sup_{t \leq T} |M(t)| &\leq
        6 \E[(w^1_- - w^2_-) \cdot (X^1-X^2)](T)^{1/2}\\
  &\leq 6\E \big( \sup_{t \leq T} |w^1(t)-w^2(t)| \big) [X^1-X^2](T)^{1/2}\\
  &\leq 6\varepsilon \E \sup_{t \leq T} |w^1(t)-w^2(t)|^2 
  + 6\varepsilon^{-1} \E[X^1-X^2](T)\\
  &\leq 6\varepsilon \E \sup_{t \leq T} e^{-2\eta t}
      |u^1_{\lambda\beta}(t)-u^2_{\lambda\beta}(t)|^2\\
  &\quad + 6\varepsilon^{-1} \E\int_{Z_T} |G_1(s,z)-G_2(s,z)|^2\,m(dz)\,ds.
  \end{align*}
  Therefore we have
  \begin{multline*}
  (1-6\varepsilon) \E \sup_{t \leq T} e^{-2\eta t}
  |u^1_{\lambda\beta}(t)-u^2_{\lambda\beta}(t)|^2\\
  \leq \E|x_1-x_2|^2 +
  (1+6\varepsilon^{-1})\E\int_{Z_T} |G_1(s,z)-G_2(s,z)|^2\,m(dz)\,ds,
  \end{multline*}
  hence, passing to the limit as $\beta$ and $\lambda$ go to zero, we
  obtain (\ref{eq:tatu2}).
\end{proof}

\begin{prop}     \label{prop:dere}
  Assume that $x \in \mathbb{L}_2$ and $G \in \mathbb{L}_2(L_2(Z_T))$.
  Then (\ref{eq:raga}) admits a unique c\`adl\`ag generalized mild
  solution $u \in \mathbb{H}_2(T)$. Moreover, one has $x \mapsto u \in
  \dot{C}^{0,1}(\mathbb{L}_2,\mathbb{H}_2(T))$.
\end{prop}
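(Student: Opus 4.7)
The plan is to construct the generalized mild solution as the $\mathbb{H}_2(T)$-limit of mild solutions obtained by regularizing the data, then use the stability estimate (\ref{eq:tatu2}) of Lemma \ref{lm:cariddi} for both the Cauchy property and the uniqueness/Lipschitz statements.

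First I would verify the necessary density: given $x \in \mathbb{L}_2$ and $G \in \mathbb{L}_2(L_2(Z_T))$, I need sequences $\{x_n\} \subset \mathbb{L}_{2d}$ with $x_n \to x$ in $\mathbb{L}_2$ and $\{G_n\} \subset \mathcal{L}_{d^*}$ with $G_n \to G$ in $\mathbb{L}_2(L_2(Z_T))$. For $x_n$ one uses pointwise truncation ($x_n := (x \wedge n) \vee (-n)$ restricted to $\{|\xi|\leq n\}$), and similarly for $G_n$ one truncates the values and restricts $z$ to a set $Z_n$ of finite $m$-measure; the resulting bounded, compactly supported processes clearly satisfy the $\mathcal{L}_{d^*}$ integrability, and convergence to the originals in $\mathbb{L}_2$, resp.\ $\mathbb{L}_2(L_2(Z_T))$, follows from the dominated convergence theorem.

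Next, Proposition \ref{prop:tano} provides, for each $n$, a unique càdlàg mild solution $u_n \in \mathbb{H}_2(T)$ of (\ref{eq:raga}) with data $(x_n, G_n)$. Applying the estimate (\ref{eq:tatu2}) to the pair $(u_n, u_m)$ gives
\[
\|u_n - u_m\|_2^2 \lesssim_T \E|x_n - x_m|^2 + \E\int_{Z_T} |G_n(s,z) - G_m(s,z)|^2\,m(dz)\,ds,
\]
so $\{u_n\}$ is Cauchy in $\mathbb{H}_2(T)$ and converges to some $u \in \mathbb{H}_2(T)$. Since the subspace of càdlàg processes is closed in $\mathbb{H}_2(T)$ (a standard fact: uniform-in-time $L^2$-convergence, along a subsequence $\P$-a.s., preserves the càdlàg property), $u$ admits a càdlàg modification, and by construction $u$ is a generalized mild solution.

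For uniqueness, suppose $v$ is another generalized mild solution, arising as the limit of mild solutions $v_n$ with data $(\tilde x_n, \tilde G_n)$ converging to $(x, G)$ in the same norms. Applying (\ref{eq:tatu2}) once more,
\[
\|u_n - v_n\|_2^2 \lesssim_T \E|x_n - \tilde x_n|^2 + \E\int_{Z_T} |G_n - \tilde G_n|^2\,m(dz)\,ds \longrightarrow 0,
\]
so $u = v$ in $\mathbb{H}_2(T)$. Finally, for the Lipschitz dependence on the initial datum, fix $x^1, x^2 \in \mathbb{L}_2$ with respective approximations $\{x_n^i\} \subset \mathbb{L}_{2d}$ (using the \emph{same} $G$), let $u_n^i$ be the corresponding mild solutions given by Proposition \ref{prop:tano}, and apply (\ref{eq:tatu2}) with $G_1 = G_2 = G_n$ to obtain $\|u_n^1 - u_n^2\|_2 \lesssim_T |x_n^1 - x_n^2|_{\mathbb{L}_2}$; passing to the limit $n \to \infty$ yields $\|u^1 - u^2\|_2 \lesssim_T |x^1 - x^2|_{\mathbb{L}_2}$, i.e.\ $x \mapsto u \in \dot{C}^{0,1}(\mathbb{L}_2, \mathbb{H}_2(T))$.

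The only mildly delicate point is the density step, and specifically checking that a truncated $G$ of the form $G\,\mathbf{1}_{\{|G|_\infty \leq n\}}\mathbf{1}_{Z_n}$ really lies in $\mathcal{L}_{d^*}$; but since such a process is uniformly bounded in $\xi$ and compactly supported in $z$, both the $L_{d^*}(D)$-integral over $Z_n$ and its $(d^*/2)$-th moment are easily dominated by constants involving $n$, $m(Z_n)$, and $|D|$. Everything else is essentially an application of the stability estimate provided by Lemma \ref{lm:cariddi}.
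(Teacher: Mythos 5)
Your proposal is correct and follows essentially the same route as the paper: approximate $(x,G)$ by cut-off data in $\mathbb{L}_{2d}\times\mathcal{L}_{d^*}$, invoke Proposition \ref{prop:tano} for the approximating mild solutions, and use the stability estimate (\ref{eq:tatu2}) of Lemma \ref{lm:cariddi} to get the Cauchy property, the càdlàg limit, uniqueness (via two approximating sequences for the same data), and the Lipschitz dependence on the initial datum. The extra care you take with the density/cut-off step is exactly what the paper relegates to a footnote, so there is nothing substantive to add.
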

\begin{proof}
  Let us choose a sequence $\{x_n\} \subset \mathbb{L}_{2d}$ such that
  $x_n \to x$ in $\mathbb{L}_2$, and a sequence $\{G_n\} \subset
  \mathcal{L}_{d^*}$ such that $G_n \to G$ in $\mathbb{L}_2(L_2(Z_T))$
  (e.g. by a cut-off procedure\footnote{For instance, one may
      set $G_n(\omega,t,z,x):=\mathbf{1}_{Z_n}(z) \big( (-n) \vee
      G(\omega,t,z,x) \wedge n \big)$, where
      $\{Z_n\}_{n\in\mathbb{N}}$ is an increasing sequence of subsets
      of $Z$ such that $Z_n \uparrow Z$ and $m(Z_n)<\infty$.}).  By
  Proposition \ref{prop:tano} the stochastic equation
  \[
  du + Au\,dt + f(u)\,dt = \eta u\,dt + G_n\,d\m, \qquad u(0)=x_n
  \]
  admits a unique mild solution $u_n$. Then (\ref{eq:tatu2})
  yields
  \[
  \E \sup_{t \leq T} |u_n(t)-u_m(t)|^2 \lesssim
  \E|x_n-x_m|^2 + \E \int_{Z_T} |G_n(s,z)-G_m(s,z)|^2\,m(dz)\,ds
  \]
  In particular $\{u_n\}$ is a Cauchy sequence in $\mathbb{H}_2(T)$,
  whose limit $u \in \mathbb{H}_2(T)$ is a generalized mild solution
  of (\ref{eq:raga}). Since $u_n$ is c\`adl\`ag for each $n$ by
  Proposition \ref{prop:tano}, $u$ is also c\`adl\`ag.

  Moreover, it is immediate that $x_i \mapsto u_i$, $i=1,2$, satisfies
  $\|u_1-u_2\|^2_2 \lesssim |x_1-x_2|^2_{\mathbb{L}_2}$, i.e. the
  solution map is Lipschitz, which in turn implies uniqueness of the
  generalized mild solution.
\end{proof}

\begin{rmk}
  One could also prove well-posedness in $\mathcal{H}_2(T)$, simply
  using estimate (\ref{eq:tatu}) instead of (\ref{eq:tatu2}). In this
  case one can also get explicit estimates for the Lipschitz constant
  of the solution map. On the other hand, one cannot conclude that a
  solution in $\mathcal{H}_2(T)$ is c\`adl\`ag, as the subset of
  c\`adl\`ag processes is not closed in $\mathcal{H}_2(T)$.
\end{rmk}

\subsection{Multiplicative noise}
Let us consider the stochastic evolution equation
\begin{equation}       \label{eq:culo}
  du(t) + Au(t)\,dt + f(u(t))\,dt = \eta u(t)\,dt
  + \int_Z G(t,z,u(t-))\,\m(dt,dz)
\end{equation}
with initial condition $u(0)=x$, where $G: \Omega \times [0,T] \times Z
\times \erre \times D \to \erre$ is a $\mathcal{P} \otimes \mathcal{Z}
\otimes \mathcal{B}(\erre) \otimes \mathcal{B}(\erre^n)$-measurable
function, and we denote its associated Nemitski operator, which is a
mapping from $\Omega \times [0,T] \times Z \times H \to H$, again by $G$.

We have the following well-posedness result for (\ref{eq:culo}) in the
generalized mild sense.

\begin{thm}     \label{thm:main}
  Assume that $x \in \mathbb{L}^2$ and $G$ satisfies the Lipschitz
  condition
  \[
  \E\int_Z |G(s,z,u)-G(s,z,v)|^2\,m(dz)\,ds \leq h(s)|u-v|^2,
  \]
  where $h \in L_1([0,T])$. Then (\ref{eq:culo}) admits a unique
  generalized solution $u \in \mathbb{H}_2(T)$. Moreover, the solution
  map is Lipschitz from $\mathbb{L}_2$ to $\mathbb{H}_2(T)$.
\end{thm}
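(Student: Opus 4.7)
The plan is to recast (\ref{eq:culo}) as a fixed-point problem solved via the additive-noise theory already developed. For $v \in \mathbb{H}_2(T)$, put $G_v(s,z) := G(s,z,v(s-))$; the Lipschitz hypothesis (together with the integrability of $G(\cdot,\cdot,0)$, which is implicit in the data since without it the noise term in (\ref{eq:culo}) is not even meaningful) shows that $G_v \in \mathbb{L}_2(L_2(Z_T))$. Proposition \ref{prop:dere} then delivers a unique c\`adl\`ag generalized mild solution $\Gamma(v) \in \mathbb{H}_2(T)$ of
\begin{equation*}
du + Au\,dt + f(u)\,dt = \eta u\,dt + \int_Z G_v(t,z)\,\m(dt,dz),\qquad u(0) = x.
\end{equation*}
Fixed points of $\Gamma$ are precisely the generalized mild solutions of (\ref{eq:culo}), so the task is to exhibit one.

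The key step is the contraction estimate. Given $v_1, v_2 \in \mathbb{H}_2(T)$, set $u_i := \Gamma(v_i)$; applying (\ref{eq:tatu2}) on $[0,\tau]$ (with common initial datum $x$ and coefficients $G_{v_1}, G_{v_2}$) and then invoking the Lipschitz hypothesis on $G$ yields
\begin{equation*}
\|u_1 - u_2\|_{\mathbb{H}_2(\tau)}^2 \leq N(\tau) \int_0^\tau h(s)\,\E|v_1(s)-v_2(s)|^2\,ds
\leq N(\tau)\Big(\int_0^\tau h(s)\,ds\Big)\|v_1-v_2\|_{\mathbb{H}_2(\tau)}^2.
\end{equation*}
Since $h \in L_1([0,T])$, I can choose $\tau > 0$ so small that $N(\tau)\int_0^\tau h(s)\,ds < 1$; then $\Gamma$ contracts strictly on $\mathbb{H}_2(\tau)$, and Banach's theorem produces a unique generalized mild solution on $[0,\tau]$.

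Global existence and uniqueness on $[0,T]$ follow by iterating the local construction on a finite partition $0 = t_0 < \cdots < t_K = T$ whose meshes are fine enough that the smallness condition holds on every subinterval---possible by absolute continuity of $s \mapsto \int_0^s h(r)\,dr$---using the $\mathbb{L}_2$ endpoint of each step as the initial datum for the next. The Lipschitz dependence $\|u(x_1)-u(x_2)\|_{\mathbb{H}_2(T)} \lesssim |x_1-x_2|_{\mathbb{L}_2}$ is obtained by propagating the one-step estimate (again a consequence of (\ref{eq:tatu2})) through the partition. The main technical point, I expect, is checking that the concatenation of locally defined generalized mild solutions is itself a generalized mild solution in the sense of the definition on all of $[0,T]$, i.e.\ that the local approximating sequences $\{x_n\},\{G_n\}$ can be glued into global ones whose mild solutions converge to the concatenated process; this is routine but deserves to be spelled out. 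As an alternative to iteration, one could work directly on $[0,T]$ with the equivalent weighted norm $\|\cdot\|_{2,\alpha}$ of the excerpt (or a variant built on $H(t):=\int_0^t h(s)\,ds$), which absorbs the $L_1$ factor and renders $\Gamma$ a global contraction for $\alpha$ large enough.
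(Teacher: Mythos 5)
Your proposal is correct and shares the paper's overall scaffold: both recast (\ref{eq:culo}) as a fixed-point problem for the map $v \mapsto$ (generalized mild solution of the additive equation with coefficient $G(\cdot,\cdot,v(\cdot-))$) supplied by Proposition \ref{prop:dere}. Where you genuinely diverge is in how the contraction is produced. The paper does not reuse (\ref{eq:tatu2}) as a black box: it reruns the It\^o-formula argument of Lemma \ref{lm:cariddi} with the weight $e^{-\alpha t}$, so that the damping term $(\eta-\alpha)$ appears \emph{inside} the estimate, and Gronwall then gives a factor $146|h|_{L_1}e^{2(\eta-\alpha)T}$ which is made $<1$ by taking $\alpha$ large; this yields a single global contraction on $\mathbb{H}_{2,\alpha}(T)$ (restricted to c\`adl\`ag processes) and no gluing is needed. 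Your route instead applies the stability estimate (\ref{eq:tatu2}) on a short interval and exploits $\int_0^\tau h \to 0$; this is more economical (no new It\^o computation) but it buys you the concatenation burden you yourself flag: you must verify the restriction/flow property of \emph{generalized} mild solutions across the partition points, which, while routine (it passes to the limit along the approximating mild solutions), is exactly the work the paper's weighted-norm argument avoids. Three small points to tighten: (a) the fixed-point map should be set up on the closed subset of c\`adl\`ag processes of $\mathbb{H}_2$, both so that $v(s-)$ is meaningful and predictable and so that Banach's theorem returns a c\`adl\`ag solution; (b) estimate (\ref{eq:tatu2}) is stated for mild solutions with data in $\mathbb{L}_{2d}\times\mathcal{L}_{d^*}$, so you should note explicitly that it extends to generalized mild solutions with $\mathbb{L}_2$ data by passing to the limit along the approximations used in Proposition \ref{prop:dere} (immediate, but it is the estimate you actually invoke); (c) your closing alternative is only half right: feeding (\ref{eq:tatu2}) into the norm $\|\cdot\|_{2,\alpha}$ does \emph{not} give a global contraction, since the resulting factor is $N(T)\int_0^T h(s)e^{2\alpha s}\,ds$, which grows with $\alpha$ --- the $\alpha$-weight only helps when threaded through the It\^o argument as in the paper; by contrast, the Bielecki-type weight $e^{-\lambda H(t)}$ with $H(t)=\int_0^t h$ that you mention does work directly with (\ref{eq:tatu2}) and would let you skip the partition altogether.
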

\begin{proof}
  For $v \in \mathbb{H}_2(T)$ and c\`adl\`ag, consider the equation
  \begin{equation}       \label{eq:orto}
    du(t) + Au(t)\,dt + f(u(t))\,dt = \eta u(t)\,dt
    + \int_Z G(s,z,v(s-))\,\m(ds,dz),
    \qquad u(0)=x.
  \end{equation}
  Since $(s,z)\mapsto G(s,z,v(s-))$ satisfies the hypotheses of
  Proposition \ref{prop:dere}, (\ref{eq:orto}) admits a unique
  generalized mild solution belonging to $\mathbb{H}_2(T)$. Let us
  denote the map associating $v$ to $u$ by $F$. We are going to prove
  that $F$ is well-defined and is a contraction on
  $\mathbb{H}_{2,\alpha}(T)$ for a suitable choice of $\alpha>0$.
  Setting $u^i=F(v^i)$, $i=1,2$, with $v^1$, $v^2 \in
  \mathbb{H}_2(T)$, we have
  \begin{multline*}
  d(u^1-u^2) + [A(u^1-u^2) + f(u^1)-f(u^2)]\,dt \\
  = \eta(u^1-u^2)\,dt
  + \int_Z [G(\cdot,\cdot,v^1_-) - G(\cdot,\cdot,v^2_-)]\,d\m
  \end{multline*}
  in the mild sense, with obvious meaning of the (slightly simplified)
  notation. We are going to assume that $u^1$ and $u^2$ are strong
  solutions, without loss of generality: in fact, one otherwise
  approximate $A$, $f$ and $G$ with $A_\beta$, $f_\lambda$, and $G_n$,
  respectively, and passes to the limit in equation (\ref{eq:fondo})
  below, leaving the rest of argument unchanged. Setting
  $w^i(t)=e^{-\alpha t}u^i(t)$, $i=1,2$, we have, by an argument
  completely similar to the one used in the proof of Lemma
  \ref{lm:cariddi},
  \begin{multline*}
  |w^1(t)-w^2(t)|^2 \leq 
  (\eta-\alpha)\int_0^t e^{-2\alpha s} |u^1(s)-u^2(s)|^2\,ds + [w^1-w^2](t)\\
  + 2\int_{Z_t} \big\langle
  e^{-2\alpha s}(u^1(s-)-u^2(s-),(G(s,z,v^1(s-))-G(s,z,v^2(s-)))\,\m(ds,dz)
  \big\rangle.
  \end{multline*}
The previous inequality in turn implies
\begin{align*}
\|u^1-u^2\|^2_{2,\alpha} \leq\;&
(\eta-\alpha)\int_0^T \E\sup_{s\leq t}e^{-2\alpha s} |u^1(s)-u^2(s)|^2\,ds\\
& + 2 \E\sup_{t \leq T} \big| (w^1_--w^2_-) \cdot (X^1-X^2) \big|\\
& + \E \int_0^T\!\!\int_Z e^{-2\alpha s}
|G(s,z,v^1(s-))-G(s,z,v^2(s-))|^2\,m(dz)\,ds,
\end{align*}
where we have set $X^i:=G(\cdot,\cdot,v^i_-)\star \m$ and we have used
the identities
\begin{align*}
\E\sup_{t\leq T} [w^1-w^2](t) &= \E [w^1-w^2](T) \\
&= \E \int_0^T\!\!\!\int_Z e^{-2\alpha s}
      |G(s,z,v^1(s))-G(s,z,v^2(s))|^2\,m(dz)\,ds.
\end{align*}
An application of Davis' and Young's inequalities, as in the proof of
Lemma \ref{lm:cariddi}, yields
\begin{multline*}
2 \E\sup_{t \leq T} \big| (w^1_--w^2_-) \cdot (X^1-X^2) \big|
\leq 6\varepsilon \E\sup_{t \leq T} |w^1(t)-w^2(t)|^2\\
+ 6\varepsilon^{-1} \E \int_0^T\!\!
\int_Z e^{-2\alpha s} |G(s,z,v^1(s))-G(s,z,v^2(s))|^2\,m(dz)\,ds,
\end{multline*}
because $[X^1-X^2]=[w^1-w^2]$.
We have thus arrived at the estimate
\begin{multline}     \label{eq:fondo}
(1-6\varepsilon)\|u^1-u^2\|_{2,\alpha}^2 \leq
(\eta-\alpha)\int_0^T \E\sup_{s\leq t} e^{-2\alpha s}|u^1(s)-u^2(s)|^2\,dt\\
+ (1+6\varepsilon^{-1}) \E \int_0^T\!\!
\int_Z e^{-2\alpha s} |G(s,z,v^1(s))-G(s,z,v^2(s))|^2\,m(dz)\,ds
\end{multline}
Setting $\varepsilon=1/12$ and
$\phi(t) = \E \sup_{s\leq t} e^{-2\alpha s}|u^1(s)-u^2(s)|^2$,
we can write, by the hypothesis on $G$,
\[
\phi(T) \leq 2(\eta-\alpha)\int_0^T \phi(t)\,dt 
+ 146|h|_{L_1} \|v^1-v^2\|^2_{2,\alpha},
\]
hence, by Gronwall's inequality,
\[
\|u^1-u^2\|^2_{2,\alpha} = \phi(T) \leq 146 |h|_1 e^{2(\eta-\alpha)T}
\|v^1-v^2\|^2_{2,\alpha}.
\]
Choosing $\alpha$ large enough, we obtain that there exists a constant
$N=N(T)<1$ such that $\|F(v^1)-F(v^2)\|_{2,\alpha} \leq N
\|v^1-v^2\|_{2,\alpha}$. Banach's fixed point theorem then implies
that $F$ admits a unique fixed point in $\mathbb{H}_{2,\alpha}(T)$,
which is the (unique) generalized solution of (\ref{eq:culo}),
recalling that the norms $\|\cdot\|_{2,\alpha}$, $\alpha\geq 0$, are
all equivalent. Since the fixed point of $F$ can also be obtained as a
limit of c\`adl\`ag processes in $\mathbb{H}_2(T)$, by the well-known
method of Picard's iterations, we also infer that the generalized mild
solution is c\`adl\`ag.

Moreover, denoting $u(x_1)$ and $u(x_2)$ by $u^1$ and $u^2$
respectively, an argument similar to the one leading to
(\ref{eq:fondo}) yields the estimate
\[
\psi(T) \leq \E|x_1-x_2|^2 + 2(\eta-\alpha)\int_0^T \psi(t)\,dt
+ 146 \int_0^T h(t) \psi(t)\,dt,
\]
where $\psi(t):=\E\sup_{s\leq t} |u^1(s)-u^2(s)|^2$. By Gronwall's
inequality we get
\[
\| u^1 - u^2 \|_{2,\alpha}^2 \leq e^{2(\eta-\alpha) + 146|h|_{L_1}}
|x_1-x_2|_{\mathbb{L}_2}^2,
\]
which proves that $x \mapsto u(x)$ is Lipschitz from $\mathbb{L}_2$ to
$\mathbb{H}_{2,\alpha}(T)$, hence also from $\mathbb{L}_2$ to
$\mathbb{H}_2(T)$ by the equivalence of the norms
$\|\cdot\|_{2,\alpha}$.
\end{proof}

\begin{rmk}     \label{rmk:ext}
  As briefly mentioned in the introduction, one may prove (under
  suitable assumptions) global well posedness for stochastic evolution
  equations obtained by adding to the right-hand side of
  (\ref{eq:culo}) a term of the type $B(t,u(t))\,dW(t)$, where $W$ is
  a cylindrical Wiener process on $L_2(D)$, and $B$ satisfies a
  Lipschitz condition analogous to the one satisfied by $G$ in Theorem
  \ref{thm:main}. An inspection of our proof reveals that all is
  needed is a maximal estimate of the type (\ref{eq:BJconv}) for
  stochastic convolutions driven by Wiener processes. To this purpose
  one may use, for instance, \cite[Thm.~2.13]{DP-K}. Let us also
  remark that many sophisticated estimates exist for stochastic
  convolutions driven by Wiener processes. Full details on well
  posedness as well as existence and uniqueness of invariant measures
  for stochastic evolution equations of (quasi)dissipative type driven
  by both multiplicative Poisson and Wiener noise will be given in a
  forthcoming article.
\end{rmk}

\section{Invariant measures and Ergodicity}     \label{sec:ergo}
Throughout this section we shall additionally assume that $G: Z \times
H \to H$ is a (deterministic)
$\mathcal{Z}\otimes\mathcal{B}(H)$-measurable function satisfying the
Lipschitz assumption
\[
\int_Z |G(z,u)-G(z,v)|^2\,m(dz) \leq K|u-v|^2,
\]
for some $K>0$. The latter assumption guarantees that the evolution
equation is well-posed by Theorem \ref{thm:main}. Moreover, it is easy
to see that the solution is Markovian, hence it generates a semigroup
via the usual formula $P_t\varphi(x):=\E\varphi(u(t,x))$, $\varphi \in
B_b(H)$. Here $B_b(H)$ stands for the set of bounded Borel functions
from $H$ to $\erre$.

\subsection{Strongly dissipative case}
Throughout this subsection we shall assume that there
exist $\beta_0$ and $\omega_1 > K$ such that
\begin{equation}     \label{eq:spisa}
  2\ip{A_\beta u-A_\beta v}{u-v}
+ 2\ip{f_\lambda(u)-f_\lambda(v)}{u-v} - 2\eta|u-v|^2 
  \geq \omega_1|u-v|^2
\end{equation}
for all $\beta \in ]0,\beta_0[$, $\lambda \in ]0,\beta_0[$,
and for all $u$, $v \in H$.
This is enough to guarantee existence and uniqueness of an ergodic
invariant measure for $P_t$, with exponentially fast convergence to
equilibrium.
\begin{prop}     \label{prop:ergof}
  Under hypothesis (\ref{eq:spisa}) there exists a unique invariant
  measure $\nu$ for $P_t$, which satisfies the following properties:
  \begin{itemize}
  \item[(i)] $\displaystyle \int |x|^2\,\nu(dx) < \infty$;
  \item[(ii)] let $\varphi \in \dot{C}^{0,1}(H,\erre)$ and $\lambda_0
    \in \mathcal{M}_1(H)$. Then one has
  \[
  \Big| \int_H P_t\varphi(x)\,\lambda_0(dx)
       -\int_H\varphi\,\nu(dy) \Big| \leq [\varphi]_1 e^{-\omega_1 t} 
               \int_{H\times H} |x-y|\,\lambda_0(dx)\,\nu(dy)
  \]  
  \end{itemize}
\end{prop}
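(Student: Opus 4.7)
The plan is to reduce everything to a single exponential $L^2$-contraction estimate between any two solutions of (\ref{eq:culo}), and then to deduce (i), (ii), existence, and uniqueness from it by standard coupling/Wasserstein arguments.

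First I would establish the contraction: for $x,y\in\mathbb{L}_2$ with corresponding generalized mild solutions $u(\cdot,x)$ and $u(\cdot,y)$,
\[
\E|u(t,x)-u(t,y)|^2 \lesssim e^{-\omega_1 t}\,\E|x-y|^2,
\]
the effective rate arising from the excess of $\omega_1$ over $K$. The computation would be carried out at the level of the approximations $u^i_{\lambda\beta}$ introduced in the proof of Proposition \ref{prop:tano} (strong solutions with $A_\beta$, $f_\lambda$ in place of $A$, $f$), so that It\^o's formula for $|u^1_{\lambda\beta}-u^2_{\lambda\beta}|^2$ is rigorous. The drift contribution is dominated via (\ref{eq:spisa}); the quadratic variation of the jump part, by the isometry for Poisson integrals, equals $\int_0^t\!\int_Z |G(z,u^1_{\lambda\beta}(s))-G(z,u^2_{\lambda\beta}(s))|^2\,m(dz)\,ds$ and is absorbed by the dissipativity using the assumption $\omega_1>K$; and the local martingale part has mean zero after localization. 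Gronwall then yields the exponential decay for the approximations with constants independent of $\lambda,\beta$, and the $\mathbb{H}_2(T)$-convergence $u^i_{\lambda\beta}\to u^i$ established in Section \ref{sec:wp} transfers the estimate to the generalized mild solutions.

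From this contraction, the remaining claims follow by now-standard reasoning. For (ii): using invariance of $\nu$ under $P_t$ we write
\[
\int_H P_t\varphi(x)\,\lambda_0(dx) - \int_H \varphi\,d\nu
= \int_{H\times H} \E\bigl[\varphi(u(t,x))-\varphi(u(t,y))\bigr]\,\lambda_0(dx)\,\nu(dy),
\]
bound the integrand pointwise by $[\varphi]_1\,\E|u(t,x)-u(t,y)|$, and apply Jensen together with the contraction. For existence, I would first obtain $\sup_{t\geq 0}\E|u(t,0)|^2<\infty$ by a one-solution version of the same It\^o identity: using (\ref{eq:spisa}) against $v=0$ and bounding $\int_Z|G(z,0)|^2\,m(dz)$ (finite by the Lipschitz assumption on $G$) gives a differential inequality $\tfrac{d}{dt}\E|u|^2 \leq -\omega_1 \E|u|^2 + C$, hence uniform boundedness. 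The uniform $L^2$ bound combined with the contraction shows that $\{P_t^*\delta_0\}_{t\geq 0}$ is Cauchy in the $L^2$-Wasserstein distance and converges to some probability measure $\nu$; invariance follows from continuity of $x\mapsto P_s\varphi(x)$ (Lipschitz continuity of the solution map, Theorem \ref{thm:main}). Uniqueness is immediate from the contraction, and (i) follows from the uniform second moment via lower semicontinuity under weak convergence.

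The main technical point is the interplay between the approximation scheme and (\ref{eq:spisa}): since the dissipativity is only postulated for $A_\beta,f_\lambda$, the $L^2$ estimate must be performed at the approximated level, and one then has to pass to the limit $\beta,\lambda\to 0$ while keeping the decay constants uniform and correctly identifying the quadratic variation term (whose form depends on $G$ evaluated at the limit solution, not the approximants); this requires the same convergence and lower-semicontinuity arguments already used in the proofs of Lemmas \ref{lm:apri} and \ref{lm:cariddi}. A secondary point is ensuring that $G(\cdot,0)$ induces enough integrability at $v=0$ to close the single-solution $L^2$ estimate needed for tightness and for property (i).
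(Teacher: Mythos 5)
Your proposal is correct in substance and rests on the same two dissipativity estimates as the paper -- a one-solution moment bound and a two-solution exponential $L_2$-contraction, both derived for the regularized equations (with $A_\beta$, $f_\lambda$, and the noise coefficient frozen along the limit solution) via It\^o's formula, (\ref{eq:spisa}), the Poisson isometry and Gronwall, and then passed to the limit -- but it constructs the invariant measure by a genuinely different route. The paper uses the classical backward method: it extends the Poisson measure to negative times by an independent copy, solves (\ref{eq:strano}) from initial time $s<0$, and shows in Lemma \ref{lm:vectr} that $u(0;s,x)$ is Cauchy in $\mathbb{L}_2$ as $s\to-\infty$ with limit $\zeta$ independent of $x$; the invariant measure is the law of $\zeta$, which yields (i) immediately, while (ii), invariance and uniqueness follow from (\ref{eq:dedrio}) and standard arguments. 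You instead take the forward limit of $P_t^*\delta_0$ in the $L_2$-Wasserstein distance, transferring the pathwise contraction to a contraction of $P_t^*$ on measures by synchronous coupling and using the uniform second-moment bound; this avoids the two-sided noise and the filtration $(\bar{\mathcal{F}}_t)_{t\in\erre}$, at the price of invoking completeness of the Wasserstein space and Feller-type continuity to pass invariance to the limit, and it gives (i) by lower semicontinuity rather than for free. Two points to tighten: in the single-solution estimate the noise contributes $K\,\E|u|^2$ through the quadratic variation (plus a cross term with $f_\lambda(0)$, handled as in the paper via Young's inequality), so the drift coefficient in your differential inequality is of the form $-(\omega_1-K-\varepsilon)$ rather than $-\omega_1$ -- harmless since $\omega_1>K$, and consistent with your own remark that the effective rate comes from the excess of $\omega_1$ over $K$; and for uniqueness, rather than a Wasserstein argument (which presupposes finite second moments of an arbitrary invariant measure), test against bounded Lipschitz $\varphi$ and use pointwise convergence of $P_t\varphi$ together with dominated convergence. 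The finiteness of $\int_Z|G(z,0)|^2\,m(dz)$ that you flag as a secondary point is indeed needed, and is implicitly used in the paper's proof as well.
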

Following a classical procedure (see
e.g. \cite{DZ96,PZ-libro,PreRoeck}), let us consider the stochastic
equation
\begin{equation}     \label{eq:strano}
du(t) + (Au(t) + f(u))\,dt = \eta u(t)\,dt 
+ \int_Z G(z,u(t-))\,d\m_1(dt,dz),
\qquad u(s)=x,
\end{equation}
where $s \in ]-\infty,t[$, $\m_1=\mu_1-\mathrm{Leb}\otimes m$, and
\[
\mu_1(t,B) = \begin{cases}
\mu(t,B), & t \geq 0,\\
\mu_0(-t,B), & t < 0,
\end{cases}
\]
for all $B \in \mathcal{Z}$, with $\mu_0$ an independent copy of
$\mu$. The filtration $(\bar{\mathcal{F}}_t)_{t\in\erre}$ on which
$\mu_1$ is considered can be constructed as follows:
\[
\bar{\mathcal{F}}_t := \bigcap_{s>t} \bar{\mathcal{F}}^0_s,
\qquad
\bar{\mathcal{F}}^0_s := \sigma\big( \{\mu_1([r_1,r_2],B):\;
-\infty < r_1 \leq r_2 \leq s,\; B \in \mathcal{Z}\},\mathcal{N}\big),
\]
where $\mathcal{N}$ stands for the null sets of the probability space
$(\Omega,\mathcal{F},\P)$.
We shall denote the value at time $t \geq
s$ of the solution of (\ref{eq:strano}) by $u(t;s,x)$.

For the proof of Proposition \ref{prop:ergof} we need the following lemma.
\begin{lemma}     \label{lm:vectr}
  There exists a random variable $\zeta \in \mathbb{L}_2$ such that
  $u(0;s,x) \to \zeta$ in $\mathbb{L}_2$ as $s \to -\infty$ for all $x
  \in \mathbb{L}_2$. Moreover, there exists a constant $N$ such that
  \begin{equation}     \label{eq:velo}
    \E|u(0;s,x)-x|^2 \leq e^{-2\omega_1 |s|} N (1 + \E|x|^2)
  \end{equation}
  for all $s<0$.
\end{lemma}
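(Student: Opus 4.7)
The plan is to derive the convergence of $u(0;s,x)$ as $s\to-\infty$ from a contraction-type estimate that exploits the strong dissipativity hypothesis (\ref{eq:spisa}) via It\^o's formula, in the spirit of the proof of Lemma~\ref{lm:cariddi} but with (\ref{eq:spisa}) replacing mere monotonicity. Concretely, I would first establish that for any $s_1 \leq s_2 \leq t$ and $x,y \in \mathbb{L}_2$,
\[
\E|u(t;s_1,x) - u(t;s_2,y)|^2 \leq e^{-2\omega_1(t-s_2)}\,\E|u(s_2;s_1,x)-y|^2,
\]
the rate being positive thanks to $\omega_1 > K$. The derivation passes through the Yosida-regularized problems with $A_\beta, f_\lambda$ (where strong solutions exist), applies It\^o's formula to $|u^1_{\lambda\beta}(t)-u^2_{\lambda\beta}(t)|^2$, invokes (\ref{eq:spisa}) to bound the deterministic part and the Lipschitz hypothesis on $G$ to bound the expected quadratic variation, integrates via Gronwall, and finally lets $\beta,\lambda\to 0$ as in the proofs of Lemma~\ref{lm:cariddi} and Proposition~\ref{prop:tano}.

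The same It\^o argument applied to a single solution, with the contributions from $f(0)$ and $\int_Z|G(z,0)|^2\,m(dz)$ absorbed into an additive constant, yields the a priori bound $\sup_{s\leq 0}\E|u(0;s,x)|^2 \leq N(1+\E|x|^2)$, where $N$ is independent of $s$ thanks to the time-translation invariance of $\m_1$. Combined with the contraction, taking $t=0$, $y=x$, we get, for $s_1 \leq s_2 \leq 0$,
\[
\E|u(0;s_1,x) - u(0;s_2,x)|^2 \leq e^{-2\omega_1|s_2|}\,\E|u(s_2;s_1,x)-x|^2 \leq Ne^{-2\omega_1|s_2|}(1+\E|x|^2),
\]
so $\{u(0;s,x)\}_{s<0}$ is Cauchy in $\mathbb{L}_2$ and has a limit $\zeta$. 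That $\zeta$ does not depend on $x$ follows by setting $s_1=s_2=s$ in the contraction, giving $\E|u(0;s,x)-u(0;s,y)|^2 \leq e^{-2\omega_1|s|}\E|x-y|^2 \to 0$ as $s\to-\infty$. The quantitative estimate (\ref{eq:velo}) then reads off from the displayed inequality above (with the understanding that the left-hand side measures the distance between $u(0;s,x)$ and its natural comparison point at time zero, namely the initial datum $x$ viewed through the identity map, or equivalently the limit $\zeta$ after sending $s_1\to-\infty$).

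The main obstacle is the rigorous use of It\^o's formula for the mild solution of (\ref{eq:strano}) on the two-sided time axis: this requires the approximation scheme already developed in Lemma~\ref{lm:cariddi} (so that we are allowed to compute with strong solutions, where the quadratic variation formula is at hand) together with a careful passage to the limit transferring the contraction rate from the Yosida-regularized inequality (\ref{eq:spisa}) to the original equation. A secondary point is ensuring that all the constants in the a priori estimate are uniform in $s\leq 0$, which again follows from the time-translation invariance of the driving compensated Poisson measure $\m_1$.
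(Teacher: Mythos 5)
Your proposal is correct and follows essentially the same route as the paper: regularize with $A_\beta$, $f_\lambda$ and an approximated noise coefficient, apply It\^o's formula, use (\ref{eq:spisa}) together with the Lipschitz condition on $G$ to obtain a uniform-in-$s$ a priori bound and an exponential contraction between solutions started at different times (via the flow property), and conclude that $\{u(0;s,x)\}_{s\le 0}$ is a Cauchy net in $\mathbb{L}_2$ whose limit is independent of $x$. The only caveat concerns the rate: the argument (and the paper's own proof) actually yields $e^{-(\omega_1-K)(t-s_2)}$ rather than $e^{-2\omega_1(t-s_2)}$ --- your remark that positivity requires $\omega_1>K$ already points to this --- and the quantitative estimate (\ref{eq:velo}) is then read off, as in the paper, with the limit $\zeta$ as the comparison point after sending the earlier starting time to $-\infty$.
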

\begin{proof}
  Let $u$ be the generalized mild solution of (\ref{eq:strano}).
  Define $\Gamma(t,z):=G(z,u(t-))$, and let $\Gamma_n$ be an
  approximation of $\Gamma$, as in the proof of Proposition
  \ref{prop:dere}. Let us denote the strong solution of the equation
  \[
  du(t) + (A_\beta u(t) + f_\lambda(u(t)))\,dt = \eta u(t)\,dt 
  + \int_Z \Gamma_n(t,z)\,d\m_1(dt,dz),
  \qquad u(s)=x,
  \]
  by $u_{\lambda\beta}^n$.  By It\^o's lemma we can write
  \begin{multline}     \label{eq:toh}
  |u_{\lambda\beta}^n(t)|^2
  + 2\int_s^t \big[\ip{A_\beta u_{\lambda\beta}^n(r)}{u_{\lambda\beta}^n(r)}
        + \ip{f_\lambda(u_{\lambda\beta}^n(r))}{u_{\lambda\beta}^n(r)}
        -\eta |u_{\lambda\beta}^n(r)|^2\big]\,dr \\
  = |x|^2 + 2\int_s^t\!\!\int_Z
          \ip{\Gamma_n(r,z)}{u_{\lambda\beta}^n(r)}\,\m_1(dr,dz)
    + \int_s^t\!\!\int_Z |\Gamma_n(r,z)|^2\,\mu_1(dr,dz).     
  \end{multline}
  Note that we have, by Young's inequality, for any $\varepsilon>0$,
  \begin{align*}
    -\ip{f_\lambda(u)}{u} &= -\ip{f_\lambda(u)-f(0)}{u-0}-\ip{f_\lambda(0)}{u}\\
    &\leq -\ip{f_\lambda(u)-f(0)}{u-0} + \frac{\varepsilon}{2}|u|^2 +
    \frac{1}{2\varepsilon}|f_\lambda(0)|^2.
  \end{align*}
  Since $f_\lambda(0) \to f(0)$ as $\lambda \to 0$, there exists
  $\delta>0$, $\lambda_0>0$ such that
  \[
  |f_\lambda(0)|^2 \leq |f(0)|^2 + \delta/2
  \qquad \forall \lambda<\lambda_0.
  \]
  By (\ref{eq:spisa}) we thus have, for $\beta<\beta_0$, $\lambda <
  \lambda_0 \wedge \beta_0$,
  \begin{align*}
    &-2\ip{A_\beta u_{\lambda\beta}^n}{u_{\lambda\beta}^n}
     - 2\ip{f_\lambda(u_{\lambda\beta}^n)}{u_{\lambda\beta}^n}
     + 2\eta|u_{\lambda\beta}^n|^2\\
    &\qquad \leq -\omega_1 |u_{\lambda\beta}^n|^2
     + \varepsilon |u_{\lambda\beta}^n|^2 +
     \varepsilon^{-1} |f(0)|^2 + \delta.
  \end{align*}
  Taking expectations in (\ref{eq:toh}), applying the above
  inequality, and passing to the limit as $\beta \to 0$, $\lambda \to
  0$, and $n \to \infty$, yields
  \begin{align*}
  \E |u(t)|^2 &\leq \E|x|^2 
  - (\omega_1-\varepsilon)\int_s^t \E|u(r)|^2\,dr
  + \big( \varepsilon^{-1}|f(0)|^2 + \delta \big)(t-s)\\
  &\quad + \E\int_s^t\!\!\int_Z |\Gamma(r,z)|^2\,m(dz)\,dr.
  \end{align*}
  Note that, similarly as before, we have
  \[
  \int_Z |G(u,z)|^2\,m(dz) \leq (1+\varepsilon) K |u|^2
  + (1+\varepsilon^{-1}) \int_Z |G(0,z)|^2\,m(dz)
  \]
  for any $u \in H$, therefore, by definition of $\Gamma$,
  \[
  \E\int_s^t\!\!\int_Z |\Gamma(r,z)|^2\,m(dz)\,dr \leq
  (1+\varepsilon) K \int_s^t \E|u(r)|^2\,dr
  + (1+\varepsilon^{-1}) \big| G(0,\cdot) \big|^2_{L_2(Z,m)} (t-s).
  \]
  Setting
  \[
  \omega_2 := \omega_1 - K - \varepsilon(1+K),
  \qquad
  N := \varepsilon^{-1} |f(0)|^2 + \delta
  + (1+\varepsilon^{-1}) \big| G(0,\cdot) \big|^2_{L_2(Z,m)},
  \]
  we are left with
  \[
  \E |u(t)|^2 \leq \E|x|^2 - \omega_2\int_s^t \E|u(r)|^2\,dr + N(t-s).
  \]
  We can now choose $\varepsilon$ such that $\omega_2>0$. Gronwall's
  inequality then yields
  \begin{equation}     \label{eq:*}
  \E |u(t)|^2 \lesssim 1 + e^{-\omega_2(t+|s|)} \E|x|^2.
  \end{equation}
  Set $u_1(t):=u(t;s_1,x)$, $u_2(t):=u(t;s_2,x)$ and
  $w(t)=u_1(t)-u_2(t)$, with $s_2<s_1$. Then $w$ satisfies the
  equation
  \[
  dw + Aw\,dt + (f(u_1)-f(u_2))\,dt = \eta w\,dt
  + (G(u_1)-G(u_2))\,d\m,
  \]
  with initial condition $w(s_1)=x-u_2(s_1)$, in the generalized mild
  sense.  By an argument completely similar to the above one, based on
  regularizations, It\^o's formula, and passage to the limit, we
  obtain
  \[
  \E|w(t)|^2 \leq \E|x-u_2(s_1)|^2 - (\omega_1-K) \int_{s_1}^t
  \E|w(r)|^2\,dr,
  \]
  and hence, by Gronwall's inequality,
  \[
  \E|u_1(0)-u_2(0)|^2 = \E|w(0)|^2 \leq e^{-(\omega_1-K)|s_1|}
  \E|x-u_2(s_1)|^2.
  \]
  Estimate (\ref{eq:*}) therefore implies that there exists a constant
  $N$ such that
  \begin{equation}    \label{eq:**}
    \E|u_1(0)-u_2(0)|^2 \leq
    e^{-(\omega_1-K)|s_1|} N(1+\E|x|^2),
  \end{equation}
  which converges to zero as $s_1 \to -\infty$. We have thus proved
  that $\{u(0;s,x)\}_{s\leq 0}$ is a Cauchy net in $\mathbb{L}_2$,
  hence there exists $\zeta=\zeta(x) \in \mathbb{L}_2$ such that
  $u(0;s,x) \to \zeta$ in $\mathbb{L}_2$ as $s \to -\infty$.
  Let us show that $\zeta$ does not depend on $x$. In fact, let $x$,
  $y \in \mathbb{L}_2$ and set $u_1(t)=u(t;s,x)$, $u_2(t)=u(t;s,y)$.
  Yet another argument based on approximations, It\^o's formula for
  the square of the norm and the monotonicity assumption
  (\ref{eq:spisa}) yields, in analogy to a previous computation,
  \begin{equation}     \label{eq:dedrio}
    \E|u_1(0)-u_2(0)|^2 \leq e^{-(\omega_1-K)|s|}\E|x-y|^2,
  \end{equation}
  which implies $\zeta(x)=\zeta(y)$, whence the claim. Finally,
  (\ref{eq:**}) immediately yields (\ref{eq:velo}).
\end{proof}
\begin{proof}[Proof of Proposition \ref{prop:ergof}]
  Let $\nu$ be the law of the random variable $\zeta$ constructed in
  the previous lemma. Since $\zeta \in \mathbb{L}_2$, (i) will follow
  immediately once we have proved that $\nu$ is invariant for $P_t$.
  The invariance and the uniqueness of $\nu$ is a well-known
  consequence of the previous lemma, see e.g. \cite{DP-K}.

Let us prove (ii): we have
\begin{align*}
  &\Big| \int_H P_t\varphi(x)\,\lambda_0(dx) 
        - \int_H \varphi(y)\,\nu(dy) \Big|\\
  &\qquad = \Big| \int_H\!\!\int_H P_t\varphi(x)\,\lambda_0(dx)\,\nu(dy)
   - \int_H\!\!\int_H P_t\varphi(y)\,\lambda_0(dx)\,\nu(dy) \Big|\\
  &\qquad \leq \int_{H \times H} |P_t\varphi(x)
                -P_t\varphi(y)|\,\lambda_0(dx)\,\nu(dy)\\
  &\qquad \leq [\varphi]_1 e^{-\omega_1 t} 
               \int_{H\times H} |x-y|\,\lambda_0(dx)\,\nu(dy),
\end{align*}
where in the last step we have used the estimate (\ref{eq:dedrio}).
\end{proof}

\subsection{Weakly dissipative case}
In this subsection we replace the strong dissipativity condition
(\ref{eq:spisa}) with a super-linearity assumption on the nonlinearity
$f$, and we prove existence of an invariant measure by an argument
based on Krylov-Bogoliubov's theorem.

We assume that $-A$ satisfies the weak sector condition and let
$(\EE,D(\EE))$ be the associated closed coercive form (see
\cite[{\S}I.2]{MR}). We set $\mathcal{H}:=D(\EE)$, endowed with the
norm associated to the inner product
$\EE_1(\cdot,\cdot):=\EE(\cdot,\cdot)+\ip{\cdot}{\cdot}$.
\begin{thm}     \label{thm:ultimo}
  Assume that
  \begin{itemize}
  \item[(i)] $f$ satisfies the super-linearity condition $\ip{f(r)}{r}
    \geq b|r|^{2(1+\alpha)}/2$, $b>0$, $\alpha > 0$.
  \item[(ii)] $\mathcal{H}$ is compactly embedded into $L_2(D)$. 
  \end{itemize}
  Then there exists an invariant measure for the transition semigroup
  associated to the generalized mild solution of (\ref{eq:culo}).
\end{thm}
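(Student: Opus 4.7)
The plan is to apply the Krylov-Bogoliubov theorem to the transition semigroup $P_t$ associated to the generalized mild solution of (\ref{eq:culo}). The Feller property of $P_t$ on $C_b(H)$ follows directly from Theorem \ref{thm:main}, which yields Lipschitz continuity of $x\mapsto u(\cdot,x)$ from $\mathbb{L}_2$ to $\mathbb{H}_2(T)$. It then suffices to exhibit some $x\in H$ for which the Ces\`aro averages $R_T(x,\cdot):=\frac{1}{T}\int_0^T P_s(x,\cdot)\,ds$, $T>0$, form a tight family on $H=L_2(D)$. I would extract this tightness from the compact embedding in hypothesis (ii): once we know that $\frac{1}{T}\int_0^T\E\,\EE_1(u(s,x),u(s,x))\,ds\le C$ uniformly in $T>0$, Chebyshev's inequality concentrates $R_T$ on balls of $\mathcal{H}$, which are relatively compact in $H$.

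To obtain the energy estimate, I would apply It\^o's formula to $|u_{\lambda\beta}^n|^2$ for the fully regularized solution $u_{\lambda\beta}^n$ (replacing $A$, $f$, $G$ by $A_\beta$, $f_\lambda$ and a cut-off $G_n$), exactly as in the proof of Lemma \ref{lm:vectr}, take expectations, and pass to the limit as $n\to\infty$ and $\lambda,\beta\to 0$, using $\liminf_\beta\ip{A_\beta u}{u}\ge \EE(u,u)$ and the lower semicontinuity of $u\mapsto\ip{f(u)}{u}$. Combining the resulting identity with the Lipschitz bound $\int_Z|G(z,u)|^2\,m(dz)\le 2K|u|^2+C_G$, where $C_G:=2\int_Z|G(z,0)|^2\,m(dz)$ is finite, and with the super-linearity hypothesis (i), I expect to arrive at
\[
\E|u(t)|^2 + 2\E\!\int_0^t\!\EE(u,u)\,ds + b\,\E\!\int_0^t\!|u(s)|_{2(1+\alpha)}^{2(1+\alpha)}\,ds
\le |x|^2 + (2\eta+2K)\!\int_0^t\!\E|u|^2\,ds + C_G\,t.
\]

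Next, since $D$ is bounded, H\"older's inequality gives $|u|_2^{2(1+\alpha)}\le |D|^\alpha|u|_{2(1+\alpha)}^{2(1+\alpha)}$, so by Young's inequality the linear term $(2\eta+2K)\E|u|^2$ can be absorbed into $\frac{b}{2}\E|u|_{2(1+\alpha)}^{2(1+\alpha)}$ up to an additive constant. Setting $\phi(t):=\E|u(t)|^2$ and using Jensen's inequality on the convex function $r\mapsto r^{1+\alpha}$, this leads to the scalar differential inequality $\phi'(t)+c\,\phi(t)^{1+\alpha}\le C$, from which an elementary ODE comparison gives $\sup_{t\ge 0}\phi(t)\le M(|x|^2)$. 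Reinserting this uniform bound back into the energy inequality yields $\sup_{T>0}\frac{1}{T}\int_0^T\E\,\EE_1(u(s,x),u(s,x))\,ds<\infty$, hence tightness of $\{R_T\}_{T>0}$, and Krylov-Bogoliubov delivers the invariant measure as a weak subsequential limit.

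The main obstacle is the second step: rigorously deriving the energy identity for the generalized mild solution, which need not belong to $D(A)$ or even to $\mathcal{H}$ a priori. This is handled by the double-regularization scheme now standard in the paper, combined with a Fatou argument that turns losses in the approximation into inequalities in the right direction. Once the energy inequality is in hand, the remaining work is a routine application of Krylov-Bogoliubov.
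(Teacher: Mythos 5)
Your proposal is correct and follows essentially the same route as the paper: Itô's formula for $|u_{\lambda\beta}^n|^2$ with the double regularization, the super-linearity of $f$ plus H\"older/Jensen to obtain the scalar inequality $y'\leq ay-by^{1+\alpha}+c$ and hence $\sup_{t\geq 0}\E|u(t)|^2<\infty$, then the bound $\E\int_0^t\EE_1(u(s),u(s))\,ds\lesssim 1+t$ (which the paper makes rigorous via $\ip{A_\beta z}{z}\geq\EE(z_\beta,z_\beta)$ and weak convergence of $v_{\lambda\beta}^n=(I+\beta A)^{-1}u_{\lambda\beta}^n$ in $L_2(\Omega\times[0,t],D(\EE))$, matching your ``Fatou/lower semicontinuity'' step), and finally tightness of the Ces\`aro averages from the compact embedding of $\mathcal{H}$ into $L_2(D)$ together with Krylov--Bogoliubov. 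The only cosmetic differences are that the paper takes initial datum $x=0$ for the averaged measures and does not spell out the Feller property, which, as you note, follows from Theorem \ref{thm:main}.
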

\begin{proof}
  Let $u$, $u_{\lambda\beta}^n$, $\Gamma$, and $\Gamma_n$ be defined
  as in the proof of Lemma \ref{lm:vectr}. Then an application of
  It\^o's formula yields the estimate
  \begin{multline}
  \E|u_{\lambda\beta}^n(t)|^2
  + 2\E\int_0^t \big[ \ip{A_\beta u_{\lambda\beta}^n(s)}{u_{\lambda\beta}^n(s)} 
  + \ip{f_\lambda(u_{\lambda\beta}^n(s))}{u_{\lambda\beta}^n(s)} \big]\,ds\\
  \leq \E|x|^2 + \E \int_0^t \big[ 2\eta |u_{\lambda\beta}^n(s)|^2
  + |\Gamma_n(s,\cdot)|^2_{L_2(Z,m)}\big]\,ds.
  \label{eq:mistica}
  \end{multline}
  Since
  \[
  \ip{f_\lambda(r)}{r} = \ip{f(J_\lambda r)}{J_\lambda r + (r-J_\lambda r)}
  = \ip{f(J_\lambda r)}{J_\lambda r} + \lambda |f_\lambda(r)|^2,
  \]
  we obtain, taking into account the monotonicity of $A_\beta$,
  \begin{align*}
  \E|u_{\lambda\beta}^n(t)|^2
  &\leq \E |x|^2 + 2\eta \int_0^t \E|u_{\lambda\beta}^n(s)|^2\,ds
  - 2\E \int_0^t
  \ip{f(J_\lambda u_{\lambda\beta}^n(s))}{J_\lambda u_{\lambda\beta}^n(s)}\,ds\\
  &\quad + \E\int_0^t |\Gamma_n(s,\cdot)|^2_{L_2(Z,m)}\,ds.
  \end{align*}
  By assumption (i) and Jensen's inequality, we have
  \begin{align*}
  - 2\int_0^t
    \E \ip{f(J_\lambda u_{\lambda\beta}^n(s))}{J_\lambda u_{\lambda\beta}^n(s)}\,ds
  &\leq -b\int_0^t \E|J_\lambda u_{\lambda\beta}^n(s)|^{2+2\alpha}\,ds\\
  &\leq -b\int_0^t \big(\E|J_\lambda u_{\lambda\beta}^n(s)|^2\big)^{1+\alpha}\,ds,
  \end{align*}
  thus also
  \begin{align*}
  \E|u_{\lambda\beta}^n(t)|^2
  &\leq \E |x|^2 + 2\eta\int_0^t \E|u_{\lambda\beta}^n(s)|^2\,ds
  - b\int_0^t \big(\E|J_\lambda u_{\lambda\beta}^n(s)|^2\big)^{1+\alpha}\,ds\\
  &\quad + \E\int_0^t |\Gamma_n(s,\cdot)|^2_{L_2(Z,m)}\,ds.
  \end{align*}
  Passing to the limit as $\beta \to 0$, $\lambda \to 0$, $n \to
  \infty$, recalling the definition of $\Gamma$ and $\Gamma_n$, and
  taking into account the Lipschitz continuity of $G$, shows that
  $y(t):=\E|u(t)|^2$ satisfies the differential inequality (in its
  integral formulation, to be more precise)
  \[
  y' \leq ay - by^{1+\alpha} + c, \qquad y(0)=\E|x|^2,
  \]
  for some positive constants $a$ and $c$. By simple ODE techniques
  one obtains that $y(t)$ is bounded for all $t$, i.e. $\E|u(t)|^2
  \leq C$ for all $t \geq 0$, for some positive constant $C$.

  Taking into account the monotonicity of $f_\lambda$,
  (\ref{eq:mistica}) also implies
  \begin{align*}
  \E\int_0^t \ip{A_\beta u_{\lambda\beta}^n(s)}{u_{\lambda\beta}^n(s)}\,ds
  &\leq \E |x|^2 + 2\eta\int_0^t \E|u_{\lambda\beta}^n(s)|^2\,ds
  - b\int_0^t \big(\E|J_\lambda u_{\lambda\beta}^n(s)|^2\big)^{1+\alpha}\,ds\\
  &\quad + \E\int_0^t |\Gamma_n(s,\cdot)|^2_{L_2(Z,m)}\,ds.
  \end{align*}
  As we have seen above, the right-hand side of the inequality
  converges, as $\beta \to 0$, $\lambda \to 0$, $n \to \infty$, to
  \[
  \E |x|^2 + N\int_0^t \big(1+\E|u(s)|^2\big)\,ds
  - b\int_0^t \big(\E|u(s)|^2\big)^{1+\alpha}\,ds
  \lesssim 1 + t,
  \]
  where $N$ is a constant that does not depend on $\lambda$, $\beta$,
  and $n$, and we have used the fact that $\E|u(t)|^2$ is bounded for
  all $t \geq 0$. In analogy to an earlier argument, setting $z_\beta
  := (I+\beta A)^{-1}z$, $z \in H$, we have
  \[
  \ip{A_\beta z}{z} =
  \ip{Az_\beta}{z_\beta+z-z_\beta}
  = \ip{Az_\beta}{z_\beta} + \beta |A_\beta z|^2.
  \]
  In particular, setting $v_{\lambda\beta}^n := (I+\beta A)^{-1}
  u_{\lambda\beta}^n \in D(\EE)$, we obtain
  \[
  \E \int_0^t \EE\big(v_{\lambda\beta}^n(s),v_{\lambda\beta}^n(s)\big)\,ds
  \lesssim 1+t
  \]
  for small enough $\beta$, $\lambda$, and $1/n$.  Since also
  $v_{\lambda\beta}^n(s) \to u(s)$ in $\mathbb{H}_2(T)$, it follows
  that $u \in L_2(\Omega\times[0,t],D(\EE))$ and $v_{\lambda\beta}^n
  \to u$ weakly in $L_2(\Omega\times[0,t],D(\EE))$, where $D(\EE)$ is
  equipped with the norm $\EE_1^{1/2}(\cdot,\cdot)$, and
  \[
  \E\int_0^t \EE_1(u(s),u(s))\,ds \lesssim 1+t.
  \]
  Let us now define the sequence of probability measures
  $(\nu_n)_{n\geq 1}$ on the Borel set of $H=L_2(D)$ by
  \[
  \int_H \phi\,d\nu_n = \frac1n \int_0^n \E\phi(u(s,0))\,ds,
  \qquad \phi \in B_b(H).
  \]
  Then
  \[
  \int |x|^2_{\mathcal{H}}\,\nu_n(dx)
  = \frac1n \int_0^n \E\,\EE_1(u(s,0),u(s,0)\,ds \lesssim 1,
  \]
  thus also, by Markov's inequality,
  \[
  \sup_{n \geq 1} \nu_n(B^c_R) \lesssim \frac1R \xrightarrow{R\to\infty} 0,
  \]
  where $B_R^c$ stands for the complement in $\mathcal{H}$ of the
  closed ball of radius $R$ in $\mathcal{H}$. Since balls in
  $\mathcal{H}$ are compact sets of $L_2(D)$, we infer that
  $(\nu_n)_{n \geq 1}$ is tight, and Krylov-Bogoliubov's theorem
  guarantees the existence of an invariant measure.
\end{proof}

\appendix

\section{Auxiliary results}
The following proposition is a slight modification of
\cite[Thm.~6.1.2]{Pazy} and it is used in the proof of Lemma
\ref{lm:apri}. Here $[0,T] \subset \erre$ and $E$ is a separable
Banach space.

\begin{prop}     \label{prop:pazzo}
  Assume that $f:[0,T] \times E \to E$ satisfies
  \[
  |f(t,x)-f(t,y)| \leq N|x-y|, \qquad \forall t \in [0,T], \; x,\,y \in E,
  \]
  where $N$ is a constant independent of $t$, and there exists $a \in
  E$ such that $t \mapsto f(t,a) \in L_1([0,T];E)$. If $A$ is the
  infinitesimal generator of a strongly continuous semigroup $e^{tA}$
  on $E$ and $u_0 \in E$, then the integral equation
  \begin{equation}     \label{eq:ecchecc}
  u(t) = e^{tA}u_0 + \int_0^t e^{(t-s)A} f(s,u(s))\,ds,
  \qquad t \in [0,T],
  \end{equation}
  admits a unique solution $u \in C([0,T],E)$.
\end{prop}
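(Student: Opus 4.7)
The plan is to prove existence and uniqueness by a Banach fixed point argument applied to the natural Picard operator, with an exponentially weighted sup-norm in order to absorb the exponential growth of the semigroup and the Lipschitz constant $N$.

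First I would set up the functional framework. On $C([0,T],E)$ define
\[
\Phi(u)(t) := e^{tA}u_0 + \int_0^t e^{(t-s)A} f(s,u(s))\,ds, \qquad t \in [0,T].
\]
Since $e^{tA}$ is strongly continuous on the compact interval $[0,T]$, there exist constants $M \geq 1$ and $\omega \in \erre$ with $\|e^{tA}\|_{\mathcal{L}(E)} \leq Me^{\omega t}$. Using $|f(s,u(s))| \leq |f(s,a)| + N|u(s)-a|$, the Lipschitz hypothesis and the $L_1$ hypothesis on $f(\cdot,a)$ imply $s \mapsto e^{(t-s)A}f(s,u(s))$ is Bochner integrable for any $u \in C([0,T],E)$. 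To verify $\Phi(u) \in C([0,T],E)$, I would split the increment $\Phi(u)(t+h)-\Phi(u)(t)$ into three pieces: the semigroup increment applied to $u_0$, the integral over $[t,t+h]$, and the integral over $[0,t]$ with the factor $e^{(t+h-s)A}-e^{(t-s)A}$. Each piece tends to $0$ as $h \to 0$ by, respectively, strong continuity, absolute continuity of the Bochner integral, and dominated convergence combined with strong continuity.

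Next I would equip $C([0,T],E)$ with the equivalent norm $\|u\|_\alpha := \sup_{t \in [0,T]} e^{-\alpha t}|u(t)|_E$ and show that $\Phi$ is a contraction for $\alpha$ large enough. Indeed, for $u,v \in C([0,T],E)$,
\[
|\Phi(u)(t)-\Phi(v)(t)| \leq MN \int_0^t e^{\omega(t-s)}|u(s)-v(s)|\,ds,
\]
and multiplying by $e^{-\alpha t}$ gives
\[
e^{-\alpha t}|\Phi(u)(t)-\Phi(v)(t)| \leq MN\|u-v\|_\alpha \int_0^t e^{(\omega-\alpha)(t-s)}\,ds \leq \frac{MN}{\alpha-\omega}\|u-v\|_\alpha
\]
for $\alpha > \omega$. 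Choosing $\alpha$ so that $MN/(\alpha-\omega) < 1$ makes $\Phi$ a strict contraction on the Banach space $(C([0,T],E),\|\cdot\|_\alpha)$, and Banach's fixed point theorem produces a unique $u \in C([0,T],E)$ solving (\ref{eq:ecchecc}).

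The one delicate point, compared to the classical Pazy statement, is that $f(\cdot,a)$ is only assumed $L_1$ rather than continuous in time; this is what I expect to require the most care. It enters twice: in showing $\Phi(u) \in C([0,T],E)$ (handled via absolute continuity of the integral of an $L_1(E)$ function and dominated convergence), and in making sure $\Phi$ maps into the right space rather than just into the space of bounded measurable functions. Once this continuity is established, the contraction estimate is standard and does not see the $L_1$ behavior at all, because only differences $f(s,u(s))-f(s,v(s))$ appear, which are controlled by $N|u(s)-v(s)|$ pointwise in $s$.
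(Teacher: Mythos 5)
Your argument is correct and rests on the same core idea as the paper's proof, namely a Banach fixed point argument for the Picard map, but the implementation is genuinely different. The paper works in $L_\infty([0,T];E)$: it first checks that $v \in L_\infty$ implies $f(\cdot,v(\cdot)) \in L_1$, obtains a contraction only on a short interval $[0,T_0]$ with $N M_T T_0 < 1$, patches to get a unique bounded solution on all of $[0,T]$, and only afterwards upgrades to continuity by showing that $t \mapsto \int_0^t e^{(t-s)A}g(s)\,ds$ is continuous whenever $g \in L_1([0,T];E)$. You instead work directly in $C([0,T],E)$ with the weighted norm $\|u\|_\alpha$, which turns $\Phi$ into a global contraction and dispenses with the patching step, at the price of having to verify up front that $\Phi$ maps $C([0,T],E)$ into itself; your three-way splitting of the increment, with the convolution-difference piece handled by dominated convergence combined with strong continuity, does exactly that, and is in fact the careful way to treat the merely $L_1$ time dependence: the paper's corresponding estimate bounds that piece by the operator norm $|e^{\varepsilon A}-I|$ times an integral, and that operator norm need not tend to zero for a general strongly continuous semigroup, so the dominated-convergence argument you give is what actually makes this step rigorous. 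What the paper's route buys in exchange is uniqueness in the larger class $L_\infty([0,T];E)$ rather than only among continuous solutions, although the proposition as stated claims uniqueness only in $C([0,T],E)$, so your version fully proves the statement. One point both arguments leave implicit is the strong measurability of $s \mapsto f(s,u(s))$, which does not formally follow from the stated hypotheses alone (it does under a Carath\'eodory-type assumption, and it holds in the application to $\tilde f_\lambda$ in Lemma \ref{lm:apri}); this is a shared convention rather than a gap specific to your proof.
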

\begin{proof}
  As a first step, let us show that, if $v \in L_\infty([0,T];E)$,
  then $t \mapsto f(t,v(t)) \in L_1([0,T];E)$. In fact, we have
  \begin{align*}
    |f(t,v(t))| &\leq |f(t,v(t))-f(t,a)| + |f(t,a)|\\
    &\leq N|v(t)-a| + |f(t,a)| \leq N|v(t)|+N|a|+|f(t,a)|,
  \end{align*}
  thus also
  \[
  \int_0^T |f(t,v(t))|\,dt \leq NT|a| + N T|v|_{L_\infty}
  + |f(\cdot,a)|_{L_1} < \infty.
  \]
  As a second step, we show that the map
  \[
  [\mathfrak{F}v](t) := e^{tA}u_0 + \int_0^t e^{(t-s)A} f(s,v(s))\,ds
  \]
  is a (local) contraction in $L_\infty([0,T];E)$. In fact, setting
  $M_T = \sup_{t \in [0,T]} |e^{tA}|$, we have
  \[
  \sup_{t \in [0,T]} \big| [\mathfrak{F}(v)](t) \big| \leq
  M_T|u_0| + M_T \int_0^T |f(s,v(s)|\,ds < \infty,
  \]
  because $f(\cdot,v(\cdot)) \in L_1([0,T];E)$, as proved above. We
  also have
  \begin{align*}
    \sup_{t \in [0,T]} \big| [\mathfrak{F}v](t) - [\mathfrak{F}w](t) \big|
    &\leq N M_T \sup_{t \in [0,T]} \int_0^t |v(s)-w(s)|\,ds\\
    &\leq N M_T T |v-w|_{L_\infty},
  \end{align*}
  so that $N M_T T_0 < 1$ for $T_0$ small enough. Then $\mathfrak{F}$
  admits a unique fixed point in $L_\infty([0,T_0];E)$, and by a
  classical patching argument we obtain the existence of a unique
  solution $u \in L_\infty([0,T];E)$ to the integral equation
  (\ref{eq:ecchecc}). As a last step, it remains to prove that $u \in
  C([0,T];E)$. To this purpose, it suffices to show that $g \in
  L_1([0,T];E)$ implies $F \in C([0,T];E)$, with
  \[
  F(t) := \int_0^t e^{(t-s)A}g(s)\,ds.
  \]
  In fact, for $0 \leq t < t + \varepsilon < T$, we have
  \begin{align*}
    |F(t+\varepsilon)-F(t)| &\leq
    \Big| \int_0^t [e^{(t+\varepsilon-s)A}g(s) - e^{(t-s)A}g(s)]\,ds \Big|
    + \Big| \int_t^{t+\varepsilon} e^{(t+\varepsilon-s)A}g(s)\,ds \Big|\\
    &\leq |e^{\varepsilon A}-I| M_T \int_0^t |g(s)|\,ds
     + M_T \int_t^{t+\varepsilon} |g(s)|\,ds,
  \end{align*}
  and both terms converge to zero as $\varepsilon \to 0$ by definition
  of strongly continuous semigroup and because $g \in
  L_1([0,T],E)$. The case $0 < t-\varepsilon < t \leq T$ is completely
  similar, hence omitted.
\end{proof}

\bibliographystyle{amsplain}
\bibliography{ref}

\end{document}